\documentclass[reqno, 11pt]{amsart}
\usepackage[letterpaper, margin=1in]{geometry}
\usepackage[utf8]{inputenc} 
\usepackage[T1]{fontenc} 
\usepackage{amsmath}
\usepackage{amsthm} 
\usepackage{amssymb}
\usepackage{tikz}
\usepackage[colorlinks=true, allcolors=black]{hyperref}
\usepackage[nameinlink]{cleveref}

\newtheorem{theorem}{Theorem}
\theoremstyle{definition}
\newtheorem{lemma}{Lemma}
\newtheorem{definition}{Definition}
\newtheorem{remark}{Remark}
\newtheorem{proposition}{Proposition}
\newtheorem{corollary}{Corollary}

\newcommand{\abs}[1]{\left\lvert#1\right\rvert}
\newcommand{\norm}[1]{\left\lVert#1\right\rVert}
\newcommand{\sqb}[1]{\left[ #1 \right]}
\newcommand{\dd}{\mathrm{d}}
\newcommand{\II}[1]{\mathbf{1}{#1}}

\author{Saba Lepsveridze}
\address{Department of Mathematics, Massachusetts Institute of Technology}
\email{sabal@mit.edu}

\author{Oriol Solé-Pi}
\address{Department of Mathematics, Massachusetts Institute of Technology}
\email{oriolsp@mit.edu}

\begin{document}

\title{Uniform Geodesic Drawings of Graphs}
\begin{abstract}
We study crossing numbers of dense graph drawings whose vertices are uniformly distributed either on the unit sphere or in a compact convex planar domain.  We prove a sharp inequality for weighted geodesic drawings on $\mathbb S^2$ in a continuous setting: among all measurable edge arrangements of a fixed density, the amount of crossings is minimized by connecting pairs of points within a fixed distance threshold.  We also prove a planar analogue for straight-line drawings in convex planar domains. We transfer these continuous results to finite graphs using a smoothing argument. In the small density limit, we recover the conjectured midrange crossing constant lower bound of $8/(9\pi^2)$ for this restricted model.
\end{abstract}

\maketitle

\section{Introduction}\label{sec:intro}
A \textit{drawing} of a graph $G$ is a representation of the graph on either $\mathbb R^2$ or $\mathbb S^2$ in which the vertices are represented by distinct points and the edges by continuous simple curves connecting their respective endpoints. A drawing is called \textit{good} if no edge contains a vertex in its relative interior, no two edges are tangent to each other or intersect at infinitely many points, and no three edges share an interior point. A \textit{crossing} is a point which lies at the intersection of two edges but does not represent a vertex. The crossing number of $G$ is the least possible number of crossings across all good drawings of $G$, and we denote it by $\mathrm{cr}(G)$. Moreover, given a good drawing $\mathcal D$ of a graph, we write \(\operatorname{cr}({\mathcal D})\) for the number of crossings induced by $\mathcal D$. By performing a stereographic projection, it can be seen that the definition of $\mathrm{cr}(G)$ does not depend on whether we consider drawings on $\mathbb R^2$ or $\mathbb S^2$.  We shall work exclusively with simple graphs. 

\subsection{The midrange crossing constant and our work} The celebrated Crossing Lemma, proved independently by Ajtai et al.~\cite{AjtaiChvatalNewbornSzemeredi1982} and Leighton~\cite{leighton1984new}, states that if $m\geq 4n$ then any graph $G$ with $n$ vertices and $m$ edges must satisfy \begin{equation}\label{eq:crossing_lemma}
\mathrm{cr}(G)\geq \frac{m^3}{64n^2}\,.
\end{equation} This bound, apart from being a foundational result in the study of crossing numbers, can be used to prove several other remarkable results, including the Szemerédi–Trotter Theorem~\cite{szekely1997crossing} from incidence geometry, and the currently best known upper bounds for the Erd\H{o}s unit distance problem~\cite{szekely1997crossing} and the number of $k$-edges produced by planar point sets~\cite{dey1998improved}. 

For positive integers $n$ and $m$, we write $\kappa(n,m)$ to denote the least crossing number among graphs with $n$ vertices and $m$ edges. In 1973, Erd\H{o}s and Guy~\cite{ErdosGuy1973} conjectured that along any sequence of pairs of the form $(n,m)$ with $n$ and $m/n$ both approaching infinity, the limit of $\kappa(n,m){n^2}/{m^3}$ exists (and this limit is independent of the sequence being used). As it turns out, this is not quite correct, but it becomes true after adding also the assumption that $n^2/m$ approaches infinity. 
\begin{theorem}[Pach, Spencer, Tóth~\cite{PachSpencerToth2000}]\label{thm:midrange_exists}
    If $n\ll m\ll n^2$, then the limit \[\lim_{n\rightarrow \infty}\kappa(n,m)\frac{n^2}{m^3}=\kappa>0\] exists.
\end{theorem}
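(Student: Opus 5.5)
The plan is to reproduce the Pach--Spencer--Tóth argument. Write $f(n,m)=\kappa(n,m)n^2/m^3$. By the Crossing Lemma \eqref{eq:crossing_lemma}, $f(n,m)\ge 1/64$ once $m\ge 4n$. For an upper bound, take a disjoint union of cliques of size about $m/n$ drawn far apart; this gives $f(n,m)=O(1)$. So $f$ is bounded above and below in the midrange. Let $\kappa^-$ and $\kappa^+$ be its liminf and limsup along sequences with $n\ll m\ll n^2$. It suffices to show $\kappa^+\le\kappa^-$, and for this we need two monotonicity-type statements that transfer good constructions between different pairs $(n,m)$.

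\textbf{Step 1: upward transfer by disjoint copies.} If $G$ has $n$ vertices and $m$ edges, then $k$ disjoint copies of $G$, drawn far apart, give $\kappa(kn,km)\le k\,\kappa(n,m)$. The quantity $n^2/m^3$ scales by $k^{-1}$, so $f(kn,km)\le f(n,m)$. Hence a good ratio at $(n,m)$ propagates to every pair with the same density $m/n$ and more vertices.

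\textbf{Step 2: changing density via blow-ups and random subsampling.} The key lemma I would prove is a near-monotonicity of $f$ under decreasing density. Start from an optimal drawing of $G$ with parameters $(n,m)$ in the midrange, and take a random induced subgraph keeping each vertex with probability $p$. The expected number of vertices is $pn$, of edges $p^2m$, and of crossings $p^4\,\mathrm{cr}$. This gives $f(pn,p^2m)\lesssim p^4 p^{-6} p^{2}f(n,m)=f(n,m)$, up to concentration errors. Because $m\ll n^2$, one may first delete low-degree vertices and balance degrees, so that the edge count after sampling concentrates. Alternatively, delete a random fraction of edges: keeping each edge with probability $q$ gives $\kappa(n,qm)\le q^2\kappa(n,m)$, so $f(n,qm)\le q^{-1}f(n,m)$. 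That direction alone is too lossy, which is why vertex sampling, which preserves $f$, is the right tool. Combining vertex sampling with Step 1 lets one start from $(n_0,m_0)$ achieving ratio near $\kappa^-$ and reach any target $(n,m)$ with $n_0\ll n$ and comparable or smaller density, within a factor $1+\varepsilon$. The remaining gap is adjusting the density upward. Since $m/n\to\infty$ along the target sequence, I would choose the witness $(n_0,m_0)$ with density $m_0/n_0$ slightly above the target density. This is possible because $\kappa^-$ is a liminf over pairs with $m_0/n_0\to\infty$ as well, so we can pick witnesses of arbitrarily large density and then sample down.

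\textbf{Step 3: conclusion and the main obstacle.} Given $\varepsilon>0$, fix a witness $(n_0,m_0)$ with $f(n_0,m_0)\le\kappa^-+\varepsilon$. Then, for any midrange $(n,m)$ that is large enough, Steps 1–2 give $f(n,m)\le(1+\varepsilon)(\kappa^-+\varepsilon)$, so $\kappa^+\le\kappa^-$. Positivity follows from the Crossing Lemma. The main obstacle is the concentration in Step 2. Vertex sampling must hit the exact edge count $m$ and vertex count $n$ simultaneously, with crossing count not exceeding its expectation by much. I would handle this with Markov's inequality on crossings together with Chebyshev on edges; the variance of the edge count is controlled by the maximum degree, after first trimming high-degree vertices with negligible effect on $f$. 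Any slack in $m$ is then fixed by deleting or adding a few edges. Adding edges costs at most $O(\text{added}\cdot m)$ crossings, which is negligible relative to $m^3/n^2$ precisely when $m\ll n^2$ and the discrepancy is small.
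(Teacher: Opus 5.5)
The paper does not prove this statement; it is quoted from Pach--Spencer--T\'oth. So I am judging your argument on its own terms, and it has a genuine gap in the density direction.

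Parametrize a pair by $D=m/n$ and $S=n^2/m$. Your operations act as follows:
\begin{itemize}
\item vertex sampling with retention $p$ sends $(D,S)\mapsto(pD,S)$;
\item taking $k$ disjoint copies sends $(D,S)\mapsto(D,kS)$;
\item trimming, edge deletion and adding a few edges never raise $D$ appreciably.
\end{itemize}
Hence from a witness $(n_0,m_0)$ you can only reach targets with $m/n\le m_0/n_0$ and $n^2/m\ge n_0^2/m_0$.

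Step 3 fixes the witness before the target, so it fails for every target with $m/n>m_0/n_0$. Along any midrange sequence that is eventually every target.

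The workaround in Step 2 is to choose the witness after the target, with larger density. This needs a near-optimal witness lying in that quadrant relative to the target, and the definition of $\kappa^-$ gives no control over where near-optimal pairs live. Suppose all of them had $m_0\le n_0^{1.1}$ while the target has $m=n^{3/2}$. Then $m_0/n_0\ge m/n$ forces $n_0\ge n^5$, hence $n_0^2/m_0\ge n_0^{0.9}\ge n^{4.5}\gg n^{1/2}=n^2/m$.

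In fact, take constants $1/64\le a<b\le 1/8$, and let the function equal $a$ when $S\ge D^9$ and $b$ otherwise. It satisfies every inequality your Steps 1--2 produce, including the edge-deletion bound, together with your a priori bounds. Yet it has no limit. So no amount of care in the concentration estimates can close the argument.

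What is missing is an $f$-preserving operation that raises the density. The standard one is a blow-up: replace each vertex $v$ by $t$ copies in a tiny disk around $v$, and each edge $uv$ by a thin bundle carrying a $K_{t,t}$ between the two clouds.
\begin{itemize}
\item This sends $(n,m)\mapsto(tn,t^2m)$, i.e.\ $(D,S)\mapsto(tD,S)$.
\item It produces at most $t^4\,\mathrm{cr}({\mathcal D})+O\bigl(t^4(m+\sum_v d_v^2)\bigr)$ crossings: $t^4$ per original crossing, plus crossings inside a bundle and between bundles at a common vertex.
\item So $f$ grows by at most $O\bigl(n^2/m^2+n^2\sum_v d_v^2/m^3\bigr)$, which is $O(Kn/m)$ once the maximum degree is at most $Km/n$.
\end{itemize}
You can arrange the degree bound by splitting every vertex of larger degree into blocks of consecutive edges in its rotation. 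This creates no crossings and at most $2n/K$ new vertices. Deleting high-degree vertices instead, as you propose, is not harmless: they may carry a constant fraction of the edges, which costs a constant factor in $f$.

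With blow-up and disjoint copies, a single fixed near-optimal witness with large $m_0/n_0$ reaches every target with $m/n\gg m_0/n_0$ and $n^2/m\gg n_0^2/m_0$. The only loss is a factor $1+O(\varepsilon)$ from rounding $t$ and $k$, padding with isolated vertices, and deleting surplus edges. In particular, the random sampling step and its concentration analysis become unnecessary.
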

Pach, Spencer and Tóth called this constant $\kappa$ the \textit{midrange crossing constant}. The inequality~\eqref{eq:crossing_lemma} gives the lower bound $\kappa \geq 1/64$. There is a long and ongoing line of research~\cite{PachToth1997,pach2004improving,ackerman2019topological,bungener2024improving} which aims at improving this lower bound. The best known result in this direction, due to B{\"u}ngener and Kaufmann~\cite{bungener2024improving}, tells us that $\kappa\geq\frac{1500}{41209}\approx\frac{1}{27.48}$. On the other hand, the best known constructions suggest the value
\begin{equation}\label{eq:conjectural-midrange-value}
        \kappa=\frac{8}{9\pi^2}\,.
\end{equation}
Pach and T\'oth~\cite{PachToth1997,pach2004improving} showed that $\kappa$ is bounded from above by this constant by considering graphs drawn on the plane as follows: take a small perturbation of the $\sqrt{n} \times \sqrt{n}$ grid and connect by a segments any two vertices at distance at most the suitable threshold $t$.  Czabarka, Singgih, Sz\'ekely and Wang~\cite{CzabarkaSinggihSzekelyWang} later gave a clean verification of the same upper bound using drawings on the sphere where the vertices are distributed uniformly and nearby vertices are connected via spherical geodesics. 

As our main contribution, we obtain a partial converse of this result. More precisely, we show that among all geodesic drawings of dense graphs with ``uniform'' vertex sets, the threshold constructions are indeed asymptotically optimal. We will define this notion of uniformity and state our main theorems more precisely in the next subsection. For now, we state a continuous version of our main theorems, which show the essence of our results. 

\begin{theorem}\label{thm:intro-planar-main}
    Let $\Omega \subset {\mathbb R}^2$ be a compact convex set.  Let $w : \Omega \times \Omega \to [0,1]$ be arbitrary symmetric function. Let random points $x_1, x_2, x_3, x_4 \in \Omega$ be sampled independently and uniformly. Then, 
       \begin{equation*}
    {\mathbb E}\Bigl[{w(x_1,x_2)w(x_3,x_4)\II{\{[x_1x_2]\text{ crosses }[x_3x_4]\}}\Bigr]} \geq  \frac{8}{9\pi^2} \,  {\mathbb E}[w(x_1,x_2)]^3\,,
    \end{equation*} 
    where $[xy]$ denotes the segment with endpoints $x$ and $y$.
\end{theorem}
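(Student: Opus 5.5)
The plan is to write the crossing functional as an integral over crossing points and directions, and then bound it from below in three independent steps: an angular step, a Cauchy--Schwarz step over the crossing point, and a bathtub step in the edge length.

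\emph{Reparametrization.} Normalize so that $\abs{\Omega}=1$. Write $S^1$ for the set of directions. For a point $p\in\Omega$, a unit vector $u\in S^1$ and $s,t>0$, put $x_1=p+su$ and $x_2=p-tu$. By convexity of $\Omega$, every point of $[x_1x_2]$ lies in $\Omega$, so the crossing point of two segments is again in $\Omega$. Define
\[
G(p,u)=\iint_{s,t>0} w(p+su,\,p-tu)\,\dd s\,\dd t ,
\]
with $w$ extended by $0$ outside $\Omega\times\Omega$.

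The Blaschke--Petkantschin type change of variables should give two identities:
\[
\int_\Omega\int_{S^1} G(p,u)\,\dd u\,\dd p = c_1\,{\mathbb E}\bigl[w(x_1,x_2)\abs{x_1-x_2}\bigr],
\]
\[
{\mathbb E}\Bigl[w(x_1,x_2)w(x_3,x_4)\II{\{[x_1x_2]\text{ crosses }[x_3x_4]\}}\Bigr] = c_2\int_\Omega \iint_{S^1\times S^1} \abs{\sin\angle(u,v)}\,G(p,u)G(p,v)\,\dd u\,\dd v\,\dd p .
\]
In the first identity the point $p$ runs along the segment, which produces the length factor. In the second, $p$ is the unique crossing point, and the Jacobian contributes the factor $\abs{\sin}$. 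I will fix the constants $c_1,c_2$ from the Jacobians, keeping track of whether $u$ is taken modulo $\pm$.

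\emph{Angular step.} The key analytic input is the Fourier expansion
\[
\abs{\sin\theta}=\frac{2}{\pi}-\frac{4}{\pi}\sum_{k\ge1}\frac{\cos 2k\theta}{4k^2-1}.
\]
Every nonconstant coefficient is negative, so the quadratic form $\iint\abs{\sin(u-v)}g(u)g(v)$ is at least its value with $g$ replaced by its mean. This gives, for each fixed $p$,
\[
\iint \abs{\sin\angle(u,v)}G(p,u)G(p,v)\,\dd u\,\dd v\ \ge\ \frac{1}{\pi^2}\Bigl(\int_{S^1}G(p,u)\,\dd u\Bigr)^2
\]
when directions are measured on a circle of length $2\pi$.

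\emph{Cauchy--Schwarz over $p$.} Since $\abs{\Omega}=1$,
\[
\int_\Omega\Bigl(\int_{S^1} G(p,u)\,\dd u\Bigr)^2\dd p\ \ge\ \Bigl(\int_\Omega\int_{S^1} G\Bigr)^2 = c_1^2\,{\mathbb E}\bigl[w\abs{x_1-x_2}\bigr]^2 .
\]

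\emph{Bathtub step.} The density of $r=\abs{x_1-x_2}$ is at most $2\pi r$, since for each fixed $x_1$ the circle of radius $r$ about it has length $2\pi r$. Given ${\mathbb E}[w]=m$ with $0\le w\le1$, the quantity ${\mathbb E}[wr]$ is minimized by putting all the weight on the shortest distances. This yields
\[
{\mathbb E}[wr]\ \ge\ \int_0^R 2\pi r^2\,\dd r=\frac{2\pi}{3}R^3,\qquad \pi R^2=m ,
\]
so ${\mathbb E}[wr]\ge \frac{2}{3\sqrt\pi}\,m^{3/2}$. Squaring and combining with the previous steps gives a bound of the form $C\,m^3$.

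\emph{Main obstacle.} The delicate part is getting the Jacobian constants $c_1,c_2$ exactly right, so that $C$ comes out as $8/(9\pi^2)$. The consistency check is that all three inequalities become asymptotically tight for the threshold construction: it has isotropic $G$, $G$ constant in $p$ away from the boundary, and weight concentrated on $\{\abs{x_1-x_2}\le R\}$. I would verify the constants by computing both sides exactly for the threshold graph in the small-$R$ limit.

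A second point to check is that convexity genuinely suffices for $p\in\Omega$ and for the Cauchy--Schwarz normalization. Boundary effects only strengthen the inequalities, because the distance density is then strictly below $2\pi r$.
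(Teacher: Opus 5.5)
\textbf{The proposal has a genuine gap: the angular step is false, and it points the wrong way.} Every nonconstant Fourier coefficient of $\abs{\sin\theta}$ is negative. Hence the form $Q(G)=\iint\abs{\sin(u-v)}G(u)G(v)\,\dd u\,\dd v$ satisfies $Q(G)\le\frac{2}{\pi}\bigl(\int G\bigr)^2$, with equality for constant $G$. So isotropic profiles \emph{maximize} $Q$ at fixed $L^1$ mass, not minimize it. No bound $Q(G)\ge c\bigl(\int G\bigr)^2$ with $c>0$ can hold: a profile supported within angle $\epsilon$ of $\pm u_0$ has $Q(G)\le\sin(2\epsilon)\bigl(\int G\bigr)^2$.

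Nor can the step be patched inside your architecture, because the intermediate inequality it would produce, $\operatorname{Cr}\ge c\,{\mathbb E}[w\abs{x_1-x_2}]^2$, is itself false. Let $w$ join $x_1,x_2$ exactly when $\abs{x_1-x_2}\le R$ and $x_1-x_2$ is within angle $\epsilon$ of a fixed line. Then all crossings occur at angle at most $2\epsilon$, and for small $R$ one gets $\operatorname{Cr}=O(\epsilon)\,{\mathbb E}[w\abs{x_1-x_2}]^2$. The theorem survives for this $w$ only because its edges are long relative to its density: $m\approx2\epsilon R^2$ while ${\mathbb E}[w\abs{x_1-x_2}]\approx\frac{4}{3}\epsilon R^3\gg m^{3/2}$. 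So what is lost in the angular step is repaid in the length step. Bounding the angular and radial contributions separately, in $L^1$, cannot capture this trade-off.

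\textbf{How the paper couples the two.} Write $A=\operatorname{area}(\Omega)$, and use the paper's crossing flux $g_\Omega=A^{-2}\iint(s+r)w$ and incidence flux $a_\Omega=A^{-2}\iint w$. The argument has three steps.
\begin{itemize}
\item Busemann's inequality $\iint f(\theta)f(\phi)\abs{\sin(\theta-\phi)}\,\dd\theta\,\dd\phi\ge\pi^{-2}\bigl(\int f^{2/3}\bigr)^3$. Its $L^{2/3}$ quasi-norm does become small for concentrated profiles.
\item H\"older with exponent $3$ over the crossing point, giving $\operatorname{Cr}_\Omega\ge\pi^{-2}A^{-2}\bigl(\int_\Omega\int g_\Omega^{2/3}\bigr)^3$.
\item A bathtub argument \emph{along each line} $(x,\theta)$, not on the global distance distribution. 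Set $b(u)=\int_0^u w\,\dd z\in[0,u]$ for $u=s+r$. Minimizing $\int u\,b(u)\,\dd u$ at fixed $\int b$ gives $g_\Omega\ge\frac{2\sqrt2}{3}A\,a_\Omega^{3/2}$.
\end{itemize}
Thus $g_\Omega^{2/3}$ dominates a constant multiple of $a_\Omega$, whose integral is exactly $e_\Omega(w)$. The exponent $2/3$ in Busemann is precisely what makes the pointwise bathtub linearize.

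\textbf{Two smaller issues.}
\begin{itemize}
\item Your identities do not match your definition of $G$. With the unweighted $G=\iint w\,\dd s\,\dd t$, one has $\int_\Omega\int G=\abs{\Omega}^2{\mathbb E}[w]$: the fiber length $\abs{x_1-x_2}$ is cancelled by the polar-coordinate factor $1/\abs{x_1-x_2}$. Also, the crossing Jacobian is $(s+t)(s'+t')\abs{\sin(\theta-\phi)}$. So both of your identities hold, with $c_1=c_2=1$ when $\abs{\Omega}=1$, only for the weighted flux $\iint(s+t)w\,\dd s\,\dd t$.
\item The value of $Q$ at the mean is $\frac{2}{\pi}(\int G)^2$, not $\frac{1}{\pi^2}(\int G)^2$. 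With $\frac{2}{\pi}$, your chain would output exactly $\frac{8}{9\pi^2}$. That is why the threshold sanity check looked consistent, even though the angular inequality runs the other way.
\end{itemize}
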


\begin{theorem}\label{thm:intro-spherical-main}
    Suppose $w : {\mathbb S}^2 \times {\mathbb S}^2 \to [0,1]$ is arbitrary symmetric measurable function. Let random points $x_1, x_2, x_3, x_4 \in {\mathbb S}^2$ be sampled independently and uniformly. Then, 
    \begin{equation*}
    {\mathbb E}\Bigl[{w(x_1,x_2)w(x_3,x_4)\II{\{[x_1x_2]\text{ crosses }[x_3x_4]\}}\Bigr]} \geq  \frac{(\sin t-t\cos t)^2}{8\pi^2}\,,
    \end{equation*}
where \(t\in[0,\pi]\) satisfies $\cos t = 1-2 {\mathbb E}[w(x_1,x_2)]$. Equality is attained by \( w(x,y):=\II{\{d(x,y)\le t\}}.\) Here, $[xy]$ denotes the short geodesic arc connecting $x$ and $y$, and $d(x,y)$ the length of this arc.
\end{theorem}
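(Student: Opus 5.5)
The plan is to reduce the statement to two separate inequalities, linked through the weighted length $\Lambda(w):={\mathbb E}[w(x_1,x_2)\,d(x_1,x_2)]$:
\begin{equation*}
\text{(A)}\quad {\mathbb E}\bigl[w(x_1,x_2)w(x_3,x_4)\II{\{[x_1x_2]\text{ crosses }[x_3x_4]\}}\bigr]\ \ge\ \frac{\Lambda(w)^2}{2\pi^2},
\qquad
\text{(B)}\quad \Lambda(w)\ \ge\ \tfrac12(\sin t-t\cos t).
\end{equation*}
Together they give the bound $(\sin t-t\cos t)^2/(8\pi^2)$. Step (A) is the key step and I have not checked that it holds; my approach to it is described in the third paragraph.

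\textbf{Radial case and equality.} I would first compute the crossing functional when $w$ depends only on the distance, $w(x,y)=h(d(x,y))$. By rotation invariance, a uniformly random pair at distance $a$ is a uniformly rotated arc of length $a$. Crofton's formula on ${\mathbb S}^2$ gives the probability that two independently rotated arcs of lengths $a,b$ cross: it is bilinear in the lengths, $P(a,b)=ab/(2\pi^2)$. I would check this by additivity in each length together with the normalization that a random great circle meets a fixed arc of length $a$ with probability $a/\pi$. The distance between two uniform points has density $\tfrac12\sin a$ on $[0,\pi]$. Hence for radial $w$ the functional equals exactly $\Lambda(w)^2/(2\pi^2)$, so (A) is an equality for radial weights. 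For $h=\II{[0,t]}$ this gives ${\mathbb E}[w]=(1-\cos t)/2$ and $\Lambda=\tfrac12\int_0^t a\sin a\,\dd a=\tfrac12(\sin t-t\cos t)$. This yields the claimed value and the equality statement.

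\textbf{Step (B).} This is a bathtub argument. Among measurable $w$ with values in $[0,1]$ and fixed mass ${\mathbb E}[w]$, the linear functional ${\mathbb E}[w\,d]$ is minimized by placing all the mass on the pairs of smallest distance. That is exactly $w=\II{\{d\le t\}}$ with $\cos t=1-2{\mathbb E}[w]$. This step is routine.

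\textbf{Step (A), the main obstacle.} Write the crossing functional as a quadratic form $Q(w)=\langle w,Kw\rangle$ on $L^2({\mathbb S}^2\times{\mathbb S}^2)$, where $K$ is the crossing indicator. Both $K$ and the rank-one kernel $d\otimes d$ are $SO(3)$-invariant, so (A) is equivalent to positive semidefiniteness of $K-\frac1{2\pi^2}\,d\otimes d$. On the $SO(3)$-invariant (radial) subspace this operator vanishes by the computation above. It then suffices to show that the restriction of $K$ to the orthogonal complement, i.e. the non-trivial isotypic components of $L^2({\mathbb S}^2\times{\mathbb S}^2)$ under the diagonal action, is positive semidefinite. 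Any cross terms between radial and non-radial parts vanish by Schur's lemma.

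My first attempt at this positivity would be an integral-geometric representation. Apply a Blaschke--Petkantschin change of variables $\dd x_3\,\dd x_4=\abs{\sin d(x_3,x_4)}\,\dd C\,\dd s_3\,\dd s_4$, parametrizing a pair of points by the great circle $C$ through them and their arclength positions on $C$. Then try to write $Q(w)$ as $\int \abs{F_w}^2$ over a space of great circles and crossing points, plus a nonnegative remainder. This is the point where the sphere must enter essentially: in the plane, locally parallel families have almost no crossings. The hope is that the global constraint, namely that uniform endpoints force every direction field on ${\mathbb S}^2$ to spread its crossings, appears as positivity on the non-trivial components. If a direct representation fails, I would fall back to computing the spectrum of $K$ via spherical-harmonic expansions in $x_1$ and $x_2$ and checking the sign componentwise.
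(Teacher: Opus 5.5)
Your radial computation and Step (B) are correct: for $w=\II{\{d\le t\}}$ you get ${\mathbb E}[w]=(1-\cos t)/2$, $\Lambda(w)=\tfrac12(\sin t-t\cos t)$ and crossing value $\Lambda(w)^2/(2\pi^2)$, which matches the paper's sharpness computation.

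The gap is Step (A), which you flagged as unchecked. It is false, so the argument cannot be closed this way. It also follows that $K-\frac{1}{2\pi^2}\,d\otimes d$ is not positive semidefinite, so the spectral fallback fails too.

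Here is a counterexample. Fix small $\epsilon>0$ and let $w(y,z)=1$ exactly when all of the following hold: $y$ and $z$ both have latitude in $[-\pi/4,\pi/4]$, their latitudes differ by at least $\pi/8$, and their longitudes differ by at most $\epsilon$.
\begin{itemize}
\item Longitude of a uniform point is uniform and independent of latitude, so ${\mathbb E}[w]\asymp\epsilon$. Also $\Lambda(w)\ge\frac{\pi}{8}{\mathbb E}[w]\gtrsim\epsilon$, so the right side of (A) is $\gtrsim\epsilon^2$.
\item On the other hand, $\abs{y\times z}=\sin d(y,z)\ge\sin(\pi/8)$. Hence $[yz]$ lies within $O(\epsilon)$ of a meridian arc inside the band, far from the poles. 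Two such edges can therefore meet only if the longitudes of their endpoints are $O(\epsilon)$-close, and the crossing functional is $O(\epsilon^3)$.
\end{itemize}
This is consistent with the theorem, whose right side is also $\asymp\epsilon^3$ here. The underlying problem is that $\Lambda$ is a single scalar that forgets directions. Long, nearly parallel edges make $\Lambda$ large while producing few crossings. The topology of the sphere only forces crossings near singularities of the direction field, and one can simply place no edges there. So your hoped-for global positivity on the non-trivial isotypic components does not exist.

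What is missing is an inequality that sees the angular distribution of edges at each point. In the paper's flux notation,
\[
\operatorname{Cr}(w)=\int_{{\mathbb S}^2}\int_0^{2\pi}\int_0^{2\pi} g(x,\theta)g(x,\phi)\abs{\sin(\theta-\phi)}\,\dd\theta\,\dd\phi\,\sigma(\dd x),
\qquad
\int_{{\mathbb S}^2}\int_0^{2\pi} g\,\dd\theta\,\sigma(\dd x)=\Lambda(w).
\]
So (A) amounts to bounding this form below by a multiple of $(\int g)^2$, which fails when $g(x,\cdot)$ concentrates near one direction.

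The paper proceeds as follows instead.
\begin{enumerate}
\item It uses Busemann's inequality, whose right side is $\pi^{-2}\bigl(\int_0^{2\pi}g(x,\theta)^{2/3}\,\dd\theta\bigr)^3$. This is small for concentrated $g$, so it correctly accounts for anisotropy.
\item It applies H\"older's inequality over the sphere.
\item It runs the bathtub argument \emph{along each geodesic fiber} through $(x,\theta)$, not globally. This gives $g\ge\varphi(a)$, where $a$ is the incidence flux with $\int_{{\mathbb S}^2}\int_0^{2\pi}a\,\dd\theta\,\sigma(\dd x)=e(w)$, and $\varphi(a)\approx c\,a^{3/2}$ for small $a$.
\item Convexity of $\varphi^{2/3}$ and Jensen's inequality finish the proof.
\end{enumerate}
Your global bathtub (B) is the integrated shadow of this pointwise step. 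Applied only after integrating, it can no longer interact with the anisotropy, which is why the chain through $\Lambda$ breaks.
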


Observe that in the spherical setting, when the edge density approaches $0$, the lower bound constant tends precisely to $8/(9\pi^2)$. This does not imply a lower bound for $\kappa$, as we are working with a restricted class of drawings. However, to the best of our knowledge, this is the first non-trivial class of drawings of dense graphs where we are able to prove sharp asymptotic results as the density approaches 0. We are optimistic that our techniques, in combination with additional ideas, might lead to a full proof of a matching $\kappa\geq{8}/{(9\pi^2)}$ lower bound.

\subsection{Main theorems}

We shall work exclusively with spherical geodesic drawings and planar straight-line drawings, defined below.

\begin{definition}[Spherical geodesic drawings and straight-line drawings]\label{def:spherical-geodesic-drawing}
A \emph{spherical geodesic drawing} of a graph \(G=(V,E)\) is a good drawing where the vertices are represented using an injective map
\[
        p:V\longrightarrow {\mathbb S}^2
\]
such that no adjacent pair is mapped to antipodal points, and each edge \(uv\in E\) is drawn as the shorter geodesic segment \([p(u)p(v)]\). Similarly, a \emph{straight-line drawing} of $G$ is a good drawing where the vertices are represented using an injective map $p:V\to\mathbb R^2$, and the edges are drawn as segments. Note that spherical geodesic drawings and straight-line drawings are completely determined by their defining map $p$, and thus we denote them simply by ${\mathcal D} = (G,p)$.
\end{definition}

\begin{definition}[Uniform spherical geodesic drawing sequence]\label{def:uniform-spherical-geodesic-sequence}
Let \({\mathcal D}_n=(G_n,p_n)\) be spherical geodesic drawings with \(|V(G_n)|=n\).  The sequence \(\{{\mathcal D}_n\}\) is \emph{uniform on the sphere} if
\[
        \nu_n:=\frac1n\sum_{v\in V(G_n)}\delta_{p_n(v)}
        \quad\Longrightarrow\quad
        \mu:=\frac{\sigma}{4\pi}
\]
weakly as probability measures on \({\mathbb S}^2\). Here \(\sigma\) denotes spherical area measure. Equivalently, for every continuous \(F:{\mathbb S}^2\to{\mathbb R}\),
\[
        \frac1n\sum_{v\in V(G_n)}F(p_n(v))
        \longrightarrow
        \int_{{\mathbb S}^2}F\,\dd\mu\,.
\]
\end{definition}

\begin{definition}[Uniform planar straight-line drawing sequence]\label{def:uniform-planar-drawing-sequence}
Let \(\Omega\subset{\mathbb R}^2\) be a compact convex set with nonempty interior, and let
\[
        \lambda:=\frac{1}{\operatorname{area}(\Omega)}\,\mathcal L^2\big|_\Omega
\]
be the normalized area measure on \(\Omega\) (i.e. the uniform probability measure on $\Omega$). A sequence of straight-line drawings \({\mathcal D}_n=(G_n,p_n)\) with $|V(G_n)|=n$ is \emph{uniform in \(\Omega\)} if
\[
        \frac1n\sum_{v\in V(G_n)}\delta_{p_n(v)}
        \quad\Longrightarrow\quad
        \lambda
\]
weakly.  Equivalently, the averages of every continuous test function on \(\Omega\) over the drawn vertex set converge to its normalized area average.
\end{definition}

We now state the main theorems of our paper.

\begin{theorem}\label{thm:main-spherical}
Let \({\mathcal D}_n=(G_n,p_n)\) be a uniform sequence of spherical geodesic drawings, with edge density $2|E(G_n)|/n^2 = e + o(1)$.  Let \(t\in[0,\pi]\) be determined by $\cos t = 1-2e.$ Then,
\begin{equation}\label{eq:main-spherical-sharp-density}
        \operatorname{cr}({\mathcal D}_n)
        \ge
        \frac{n^4}{64\pi^2}
        \bigl(\sin t-t\cos t\bigr)^2
        -o(n^4)\,.
\end{equation}
\end{theorem}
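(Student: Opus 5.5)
We will deduce the finite statement from the continuous inequality, \Cref{thm:intro-spherical-main}, by a smoothing argument. The plan is to build from the drawing $\mathcal D_n$ a symmetric measurable kernel $w_n:\mathbb S^2\times\mathbb S^2\to[0,1]$ with three properties. First, $\mathbb E[w_n]= e+o(1)$. Second, the weighted crossing expectation of $w_n$ is at most $\frac{8}{n^4}\operatorname{cr}(\mathcal D_n)+o(1)$. Third, the two endpoints of each edge are replaced by small spherical caps around the vertex images. Applying \Cref{thm:intro-spherical-main} to $w_n$ and using continuity of $e\mapsto(\sin t-t\cos t)^2$ then gives \eqref{eq:main-spherical-sharp-density}. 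The factor $8$ is the bookkeeping that turns ordered pairs of ordered edges $(x_1x_2,x_3x_4)$ into unordered crossing pairs: $n^4/64\pi^2 = \frac{n^4}{8}\cdot\frac{1}{8\pi^2}$.

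\emph{Step 1 (transport).} Since $\nu_n\Rightarrow\mu$, we fix $\varepsilon>0$ and partition $\mathbb S^2$ into finitely many cells $C_1,\dots,C_K$ of diameter less than $\varepsilon$ with $\mu(\partial C_j)=0$. Weak convergence gives $|\nu_n(C_j)-\mu(C_j)|\to0$ for each $j$. We then choose a measure-preserving coupling $\pi_n$ between $\nu_n$ and $\mu$ that moves mass by at most $\varepsilon$, except for a total mass $o(1)$. Concretely, inside each cell we split $C_j$ into pieces $A_v$ of $\mu$-measure $\min(1/n,\cdot)$ assigned to vertices $v$ with $p_n(v)\in C_j$, and we leave $o(1)$ mass unmatched. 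We set $w_n(x,y)=\mathbf 1\{uv\in E\}$ when $x\in A_u$ and $y\in A_v$, and $w_n=0$ elsewhere. This gives $\mathbb E[w_n]= \frac{2|E|}{n^2}+o(1)=e+o(1)$.

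\emph{Step 2 (crossing comparison).} This is the main obstacle. With $x_i$ uniform, the expectation for $w_n$ counts, for each ordered pair of edges $(u_1u_2,u_3u_4)$, the probability that geodesics between random points of $A_{u_1}\times A_{u_2}$ and $A_{u_3}\times A_{u_4}$ cross. We need this to be at most $n^{-4}$ times the indicator that the original edges cross, up to a total error of $o(1)$ after summing. Moving endpoints by at most $\varepsilon$ changes the crossing status only when some endpoint lies within $O(\varepsilon)$ of the other geodesic, or when the edge is long and nearly antipodal. We control the bad quadruples by a measure estimate in the continuous model: for uniform $x_1,\dots,x_4$, the event that $x_3$ lies within $\varepsilon$ of the geodesic $[x_1x_2]$ has probability $O(\varepsilon)$, and so does the near-antipodal event. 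Since $w_n\le 1$ and the transported configuration is $\mu$-distributed, the contribution of bad quadruples is $O(\varepsilon)+o(1)$. The subtle point is that the graph pairs involved are not distributed uniformly: they are counted via $\nu_n^{\otimes4}$. For this reason we should bound the bad set by a continuous function of the four positions, namely a thickened indicator of "near-degenerate", and pass to the limit using $\nu_n^{\otimes4}\Rightarrow\mu^{\otimes4}$. We must also account for pairs of edges sharing a vertex; these number $O(n^3)$ and are negligible.

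\emph{Step 3 (conclude).} Combining the steps, $\frac{8}{n^4}\operatorname{cr}(\mathcal D_n)\ge \frac{(\sin t_n-t_n\cos t_n)^2}{8\pi^2}-O(\varepsilon)-o(1)$, where $\cos t_n=1-2\mathbb E[w_n]\to1-2e$. Letting $n\to\infty$ and then $\varepsilon\to0$ gives the claim. If the weak-convergence approximation in Step 2 proves delicate, a fallback is to randomize the cell partition by a random rotation and average. This makes the expected bad mass $O(\varepsilon)$ directly, since every geodesic of a good drawing has measure zero.
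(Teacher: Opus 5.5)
Your outline is sound and follows the paper's overall strategy: fatten each vertex into a small region around its image, show the fattening adds only $o(1)$ plus a small normalized number of crossings, apply the continuous theorem, then let $n\to\infty$ and the fattening scale tend to $0$. Both technical devices differ from the paper's.

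\emph{Smoothing.} The paper convolves each vertex with a smooth radial kernel of radius $\delta$. This gives the exact density $e(w_{n,\delta})=2|E(G_n)|/n^2$, but only $w_{n,\delta}\le\rho_{n,\delta}(x)\rho_{n,\delta}(y)$ with $\rho_{n,\delta}\to1$ uniformly, hence the rescaling \Cref{cor:near-uniform}. Your partition into pieces $A_v$ of measure at most $1/n$ yields a genuine $\{0,1\}$-valued kernel. So \Cref{thm:intro-spherical-main} applies directly, at the price of an $o(1)$ density error that continuity absorbs.

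\emph{Dangerous pairs.} The paper counts them discretely: it discards edges with length outside $[\alpha,\pi-\alpha]$ via cap counts and bounds the rest with the uniform tube count of \Cref{lem:reduction-uniform-counts}. Either of your alternatives works and avoids that lemma. Because $w_n\le1$ and the boxes $A_{u_1}\times\cdots\times A_{u_4}$ are disjoint, near-degenerate configurations of the transported points contribute at most their $\mu^{\otimes4}$-measure. Alternatively, for a closed near-degenerate set $B$ of vertex positions, $\limsup_n\nu_n^{\otimes4}(B)\le\mu^{\otimes4}(B)$ by Portmanteau. The random-rotation fallback is unnecessary and would not help, since which pairs are dangerous depends on the vertex positions, not on the partition.

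What needs fixing is the quantitative form of your geometric claim, which is the real content of Step 2. For ``nearly antipodal'' to have probability $O(\varepsilon)$, it must mean length $>\pi-\alpha$ with $\alpha\asymp\sqrt{\varepsilon}$. But the midpoint of an arc of length $\pi-2\alpha$ moves by about $\varepsilon/\alpha$ when its endpoints move by $\varepsilon$. So a vertex at distance $\asymp\varepsilon/\alpha\gg\varepsilon$ from the arc can change sides, and ``within $O(\varepsilon)$'' is false.

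The correct statement is \Cref{lem:reduction-new-crossing-geometric}: for arcs of length in $[\alpha,\pi-\alpha]$ the threshold is $C\varepsilon/\alpha$. It is proved from the Hausdorff bound $d_H([xx'],[yy'])\le C\varepsilon/\alpha$ together with a first-contact argument along a continuous motion of the endpoints. That argument uses the fact that non-crossing independent edges of a good geodesic drawing are disjoint. With $\alpha=\sqrt{\varepsilon}$ the bad mass is $O(\alpha^2+\varepsilon/\alpha)=O(\sqrt{\varepsilon})$ rather than $O(\varepsilon)$, which still vanishes.

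If you measure the bad set in the transported variables, you need the lemma with roles reversed: stability and proximity imposed on the perturbed points, and a crossing turning into a non-crossing. The same last-contact argument gives this.
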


\begin{theorem}\label{thm:main-planar}
Let \({\mathcal D}_n=(G_n,p_n)\) be a uniform sequence of straight-line drawings on compact convex $\Omega \subset {\mathbb R}^2$, with $|E(G_n)|/n^2 = e + o(1)$.  Then,
\begin{equation}\label{eq:main-planar-sharp-density}
        \operatorname{cr}({\mathcal D}_n)
        \ge \frac{8n^4}{9\pi^2} e^3  - o(n^4)\,.
\end{equation}
\end{theorem}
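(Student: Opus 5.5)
We will deduce the statement from the continuous planar inequality, Theorem~\ref{thm:intro-planar-main}, by a smoothing argument. The approach is to turn the discrete drawing $\mathcal D_n$ into a weight $w_n:\Omega\times\Omega\to[0,1]$ and compare. The difficulty is that $\nu_n$ is atomic while $\lambda$ is absolutely continuous, so we cannot plug $\mathcal D_n$ in directly.

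\textbf{Step 1: discretize.} Fix $\varepsilon>0$ and partition $\Omega$ into $N$ cells $Q_1,\dots,Q_N$ of equal $\lambda$-measure $1/N$ and diameter $O(\sqrt{1/N})$. For instance, take grid squares of side $\delta$ intersected with $\Omega$, and handle boundary cells separately; they have total measure $O(\delta)$ by convexity. Weak convergence $\nu_n\Rightarrow\lambda$, together with $\lambda(\partial Q_i)=0$, gives $|p_n^{-1}(Q_i)|=n/N+o(n)$. Define the block weight
\[
w_n(x,y)=\frac{N^2}{n^2}\,e_{G_n}\bigl(p_n^{-1}(Q_i),p_n^{-1}(Q_j)\bigr)\quad\text{for } x\in Q_i,\ y\in Q_j .
\]
After a $(1+o(1))$ correction and truncation at $1$, this lies in $[0,1]$ and is symmetric. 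Its mean is $\mathbb E[w_n]=2|E(G_n)|/n^2+o(1)$. Matching the normalization of Theorem~\ref{thm:intro-planar-main} (ordered pairs versus $|E|/n^2=e$) will require checking the constant bookkeeping so that the conclusion reads $\frac{8}{9\pi^2}e^3n^4$.

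\textbf{Step 2: compare crossings, the main obstacle.} We must show
\[
\operatorname{cr}(\mathcal D_n)\ \ge\ \frac{n^4}{c}\,\mathbb E\bigl[w_n(x_1,x_2)w_n(x_3,x_4)\II{\{[x_1x_2]\text{ crosses }[x_3x_4]\}}\bigr]-O(\eta(N))n^4-o(n^4),
\]
with $\eta(N)\to0$ and $c$ the combinatorial factor counting unordered pairs of edges. Group pairs of edges by their cell quadruple $(i,j,k,l)$. If the four cells are \emph{well-separated}, meaning that every choice of points $x_1\in Q_i,\dots,x_4\in Q_l$ gives the same crossing status, then the discrete count and the continuous integral agree exactly on that block. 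Straight segments make this work: crossing is determined by orientation signs of triples, so a quadruple fails to be uniform only if some cell triple $(Q_a,Q_b,Q_c)$ contains nearly collinear configurations. The measure of quadruples $(x_1,\dots,x_4)\in\Omega^4$ lying within distance $O(N^{-1/2})$ of a degenerate configuration (a point near the line through two others, or two points close together) is $O(N^{-1/2}\log N)$ or similar. Because $w_n\le1$ and each cell carries about $n/N$ vertices, the edge pairs in bad blocks number at most $O(\eta(N))n^4$. A pair of edges sharing a vertex is not a crossing, but such pairs number only $O(n^3)=o(n^4)$. Bounding these degenerate blocks uniformly in $n$ is where the care is needed. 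We would first try the orientation-sign argument, estimating the measure of the set where $\det(x_2-x_1,x_3-x_1)$ is within $O(\delta)$ of zero.

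\textbf{Step 3: conclude.} Apply Theorem~\ref{thm:intro-planar-main} to $w_n$ to get
\[
\operatorname{cr}(\mathcal D_n)\ge \frac{8}{9\pi^2}e^3n^4-O(\eta(N))n^4-o(n^4).
\]
Letting $n\to\infty$ and then $N\to\infty$ yields \eqref{eq:main-planar-sharp-density}. Continuity of $e\mapsto e^3$ absorbs the $o(1)$ density errors and the truncation.
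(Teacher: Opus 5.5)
Your argument is correct, but the reduction runs differently from the paper's. The paper writes out only the spherical reduction. It smooths each vertex into a radial cloud of radius $\delta$, takes $w_{n,\delta}$ to be a sum of products of clouds, and rescales via \Cref{cor:near-uniform}. It then controls new crossings edge by edge. After discarding edges of length outside $[\alpha,\pi-\alpha]$, \Cref{lem:reduction-new-crossing-geometric} forces an endpoint within $C\delta/\alpha$ of the other edge, and the cap and tube counts of \Cref{lem:reduction-uniform-counts} give an $O(\sqrt\delta)$ error.

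You instead use a step function on a fixed partition into small convex cells, so nothing is perturbed. Crossing status is constant on any block where none of the four orientation determinants vanishes; a sign change forces a zero because products of closed convex cells are connected. The discrepancy is therefore bounded by the $\lambda^4$-measure of near-degenerate blocks, which is a purely continuous quantity. For fixed $N$, the discrete-to-continuous passage is just weak convergence on finitely many cells with $\lambda(\partial Q_i)=0$.

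This route dispenses with both the perturbation lemma and the uniform tube counts, and it suits the planar setting well. Your weight lives on $\Omega\times\Omega$ with vertex density uniformly close to $1$ up to $\partial\Omega$. By contrast, kernel smoothing drops to at most about $1/2$ on $\partial\Omega$ and leaks outside, so carrying out the paper's route literally in the plane needs an extra step, for example working on the convex $\delta$-neighborhood of $\Omega$.

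The items you left open all close:
\begin{itemize}
\item Normalize the block weight by $\lambda(Q_i)\lambda(Q_j)$ instead of insisting on equal-measure cells. Then boundary cells need no separate treatment, and $\sup w_n\le 1+o(1)$ as $n\to\infty$ with $N$ fixed.
\item The bad set has measure $O(N^{-1/2})$ with no logarithm. For $\abs{x_1-x_2}=r$, the third point must lie within $O(\delta(1+D/r))$ of the line through $x_1,x_2$, where $D=\operatorname{diam}\Omega$. Integrating against $r\,\dd r$ gives $O(\delta)$.
\item What must be small is the continuous contribution of the bad blocks. Since $w_n\le 1$, this is at most their $\lambda^4$-measure, so no uniformity in $n$ beyond finitely many cells is needed.
\item The bookkeeping is $\mathbb E[w_n]\to 2e$, with each crossing counted $8$ times. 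This gives $\tfrac18\cdot\tfrac{8}{9\pi^2}(2e)^3=\tfrac{8}{9\pi^2}e^3$, exactly the stated constant.
\end{itemize}
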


\subsection{Related work} 

As mentioned earlier, the best known lower bound follows from the work of B{\"u}ngener and Kaufmann~\cite{bungener2024improving}, who showed that an $n$-vertex graph with $m\geq6.77n$ edges must have crossing number at least \[\frac{1500m^3}{41209n^2}\,.\] 

As far as we are aware, all previous works obtaining lower bounds for $\kappa$ have followed the same overarching strategy. This strategy consists of first deriving a universal lower bound for $\mathrm{cr}(G)$ which grows linearly in the number of edges $m$, and then \emph{boosting} this bound by the probabilistic method. Proving the linear lower bound often reduces to analyzing sparse graphs with $m = \Theta(n)$.

For example, the bound in~\eqref{eq:crossing_lemma} can be derived in this way by starting from the inequality $\mathrm{cr}(G)\geq m-(3n-6)$. In turn, this linear bound follows from the fact that a maximal planar subgraph of $G$ can have no more than $3n-6$ edges, and thus any additional edge must incur at least one crossing. For comparison, the stronger bound from~\cite{bungener2024improving} is obtained by first bounding the maximum size of a subgraph of $G$ which can be drawn without inducing certain forbidden configurations (e.g. an edge involved in at least $4$ crossings). 

We expect the crossing constant $8/(9\pi^2)$ to be tight only in the midrange regime, so it seems unlikely to us that this strategy will allow us to determine the precise value of $\kappa$.  

Instead of working with graphs $m = \Theta(n)$, we will work almost exclusively with dense graphs with $m = \Theta(n^2)$. In order to discuss how our approach compares to the strategy used in prior works, we introduce additional notation. For $e\in(0,1]$, let us write $\tau(n,e)$ to denote the least number of crossings of an $n$-vertex graph with $m= \lceil e\binom{n}{2}\rceil$ edges. It is known that, as $n$ approaches infinity
\[\tau(n,e)\frac{n^2}{m^3}\sim\tau(n,e)\frac{8}{e^3n^4} \to \tau(e)\,,\]
where $\tau(e) > 0$ is some constant. In turn, \Cref{thm:midrange_exists} tells us that \[\lim_{e\rightarrow0}\tau(e)=\kappa\,.\]
This simple identity presents us with a different approach to determining the exact value of $\kappa$, which we argue possesses two important advantages.

Firstly, as the parameter $a$ grows, there appears to be no natural conjecture regarding the optimal Crossing Lemma constant for graphs with at least $an$ edges. In fact, for large values of $a$, simply finding a good candidate for the correct constant (without a proof) might already be an extremely difficult problem. In contrast, it seems reasonable to expect that the value of $\tau(e)$ is asymptotically attained by spherical drawings where the vertex distribution is close to uniform and any two vertices whose distance is below a certain suitable threshold are joined by a shortest spherical geodesic. If true, this would imply that \[\tau(e)=\frac{1}{8\pi^2}\cdot\frac
{(\sin t-t\cos t)^2}{e^3}\,,\] where $t$ is so that $\cos t=1-2e$ (this computation was first carried out in~\cite{CzabarkaSinggihSzekelyWang}). This provides us with a precise goal to aim for when working in the dense setting, even before passing to the limit by letting $e$ tend to $0$.

Even more importantly, working with dense graphs allows one to operate in a continuous setting where, instead of studying finite graphs, our vertex set is represented by some probability measure $\mu$ on the sphere, adjacencies are encoded by means of a measurable symmetric function $w$, and edges are represented by curves satisfying also some measurability condition.\footnote{Since we shall consider only drawings where edges are represented by geodesics, no additional measurability assumption is required on the curves representing the edges. However, the general situation is more complicated.} For the reader who is familiar with the theory of dense graph limits, we clarify that this should simply be thought of as a drawing of a graphon on the sphere. Working with such continuous objects will be crucial in our proof, as it allows us to attack the problem from a more analytical perspective. In fact, our main argument will be carried out entirely in the continuous world, and only then will we translate our results back to the discrete setting. 

We remark that the literature on crossing numbers is vast, and our discussion above has touched upon only one of the many interesting questions regarding this and other related parameters. We refer the reader to Schaefer's book~\cite{SchaeferSurvey} for further background and several interesting open problems regarding crossing numbers and their variants.

\subsection{Future work}
One of the most important open problems in this area is that of determining the midrange crossing constant, which is believed to take the value \(\kappa = 8/(9\pi^2)\). We expect this to also be the correct value in the settings where edges must be drawn as geodesics on the sphere --- in fact, the general conjecture would directly imply this. We first note that this restricted conjecture, and much more, would follow if the uniformity hypothesis could be removed from \Cref{thm:intro-spherical-main}. Indeed, if this were possible, the threshold construction would be optimal for every fixed edge density \(e\), among all dense spherical geodesic drawings. Letting \(e\to0\) would then yield the conjectural midrange constant in the geodesic setting. Moreover, such a result would also imply the planar straight-line case as a corollary, as one can zoom into local chart of the sphere to mimic any planar configuration with arbitrary precision.

It is useful to separate this difficulty from the corresponding question for drawings with arbitrary curves representing the edges. In this setting, the vertex distribution can be assumed to be uniform without loss of generality: one can apply a homeomorphism which maps the vertex set to an equidistributed one, without increasing the number of crossings. The real issue in that setting is therefore the geometry and organization of the curves representing the edges. If one could prove \Cref{thm:main-spherical} for curves satisfying mild regularity assumptions, it should also follow that the Harary-Hill conjecture regarding the value of $\mathrm{cr}(K_n)$ is true up to lower order terms, which would partially address one of the oldest problems in the area (see~\cite{MR3982073} and there references therein for further background).

Our methods are analytic and geometric in nature. Our proof proceeds through a sequence of inequalities which describe the optimal local geometry of a drawing. Busemann's inequality controls the angular distribution of the edges through a point; the local bathtub principle identifies the optimal arrangement along each geodesic passing through that point; and H\"older's inequality turns these local constraints into a global crossing lower bound. In the uniform geodesic setting these ingredients fit together sharply, and the equality case corresponds exactly to the threshold construction. We hope that these analytic techniques can be made robust enough to treat more general situations.

\subsection*{Acknowledgments}
O.S.P. is very grateful to Larry Guth, Arnaud de Mesmay, Hugo Parlier and, particularly, Alfredo Hubard, for several insightful conversations. S.L. and O.S.P. were partially supported by the NSF–Simons Research Collaboration Grant (Award No.~2031883).

\section{Main argument}\label{sec:main}

The analytic part of the proof consists of a continuous inequality for weighted geodesic edges on the sphere.  The finite drawing \({\mathcal D}_n\) will later be converted into such a weighted object in \Cref{sec:reduction}.

\subsection{Main Theorem}\label{subsec:main-setup}
Let \({\mathbb S}^2\subset{\mathbb R}^3\) be the unit sphere, let \(\sigma\) denote the area measure, and
\[
        \mu:=\frac{\sigma}{4\pi}
\]
be the normalized spherical area measure.  For \(x,y\in{\mathbb S}^2\), write \(d(x,y)\) for the spherical distance.  When $x$ and $y$ are not antipodal, write \([xy]\) for the shorter geodesic segment joining \(x\) to \(y\).  A continuous geodesic spherical drawing is encoded by a measurable symmetric function $w:{\mathbb S}^2\times{\mathbb S}^2\longrightarrow[0,1]$. Its ordered edge density is
\begin{equation}\label{eq:continuous-density}
        e(w):=
        \int_{{\mathbb S}^2}\int_{{\mathbb S}^2}w(x,y)\,\mu(\dd x)\mu(\dd y)\,,
\end{equation}
and its crossing functional is
\begin{equation}\label{eq:continuous-crossing-functional}
        \operatorname{Cr}(w):=
        \int_{({\mathbb S}^2)^4}
        w(x_1,x_2)w(x_3,x_4)
        \II{\{[x_1x_2]\text{ crosses }[x_3x_4]\}}
        \prod_{i=1}^4\mu(\dd x_i)\,.
\end{equation}

The crossing condition in \eqref{eq:continuous-crossing-functional} is understood up to the null set of degenerate configurations. The purpose of this section is to prove the following theorem.

\begin{theorem}\label{thm:continuous-spherical-crossing-main}
Let \(w:{\mathbb S}^2\times{\mathbb S}^2\to[0,1]\) be measurable and symmetric. Then,
\begin{equation}\label{eq:continuous-spherical-bound-main}
        \operatorname{Cr}(w)\ge
        \frac{(\sin t-t\cos t)^2}{8\pi^2}\,,
\end{equation}
where \(t\in[0,\pi]\) satisfies $\cos t = 1-2e(w)$. Equality is attained by \( w(x,y):=\II{\{d(x,y)\le t\}}.\) 
\end{theorem}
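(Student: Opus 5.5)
The plan is to rewrite $\operatorname{Cr}(w)$ as an integral over \emph{crossing points} rather than over quadruples of endpoints, and then bound the integrand pointwise in $p\in{\mathbb S}^2$.

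\textbf{Step 1 (integral-geometric change of variables).} I would parametrize an ordered pair $(x_1,x_2)$ by the great circle $\ell$ through them and arclength positions $s_1,s_2$ on $\ell$. A Blaschke--Petkantschin type formula gives $\mu(\dd x_1)\mu(\dd x_2)=c\,\abs{\sin(s_1-s_2)}\,\dd s_1\,\dd s_2\,\dd\ell$. Next I would fix a point $p$ and a direction $\theta\in\mathbb{RP}^1$ at $p$, which determines a great circle $\ell_{p,\theta}$. Define the \emph{edge flux}
\[
f_p(\theta):=\iint \abs{\sin(s_1-s_2)}\,w(x(s_1),x(s_2))\,\II{\{p\in[x(s_1)x(s_2)]\}}\,\dd s_1\,\dd s_2 ,
\]
the integral taken along $\ell_{p,\theta}$. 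Two geodesic segments cross exactly at their common point $p$. A second application of the same coarea formula should then give
\[
\operatorname{Cr}(w)=C\int_{{\mathbb S}^2}\int\!\!\int f_p(\theta)f_p(\varphi)\abs{\sin(\theta-\varphi)}\,\dd\theta\,\dd\varphi\,\mu(\dd p).
\]
I would fix the constant $C$ by checking this identity on the threshold kernel $w_t=\II{\{d\le t\}}$, for which everything is rotation invariant and the value $(\sin t-t\cos t)^2/(8\pi^2)$ is already known from the Czabarka--Singgih--Sz\'ekely--Wang computation.

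\textbf{Step 2 (local bathtub along each line).} For fixed $p$ and $\theta$, the flux $f_p(\theta)$ is a weighted integral of $w\in[0,1]$ against the kernel $\abs{\sin(s_1-s_2)}\II{\{p\text{ between}\}}$. Write $m_p(\theta)$ for the weight that $w$ puts near $\ell_{p,\theta}$. By the bathtub principle, for a given $m_p(\theta)$ the flux is extremized by a threshold set, namely pairs straddling $p$ at short distance. This yields a pointwise relation $f_p(\theta)\le \Phi(m_p(\theta))$ with an explicit function $\Phi$. More importantly, it gives an upper bound $f_p(\theta)\le M$ that is uniform in $\theta$, which comes from $w\le1$ and the uniformity of $\mu$. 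The mass constraint $e(w)=e$ is then transported to a constraint on $\int\!\int f_p(\theta)\,\dd\theta\,\mu(\dd p)$, or on a suitable power of $m_p$. I would arrange this so that the threshold kernel saturates every inequality simultaneously.

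\textbf{Step 3 (Busemann-type angular inequality and H\"older).} For a density $f$ on $\mathbb{RP}^1$ with $0\le f\le M$ and fixed $\int f$, the quadratic form $\iint f f\abs{\sin(\theta-\varphi)}$ is minimized when $f$ is concentrated on an arc. However, the threshold kernel gives a \emph{constant} flux in $\theta$, so the correct extremal statement must instead be a Busemann-type inequality: it has to compare the angular energy with a power of the total flux, with equality exactly for isotropic flux. I would derive it from Busemann's intersection-body inequality, equivalently convexity of $\theta\mapsto f_p(\theta)^{-1}$-type radial functions, together with the fact that the fluxes come from actual segments through $p$. This gives a lower bound on the inner angular integral by $\Psi\bigl(\int f_p\bigr)$ for a convex $\Psi$. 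Integrating over $p$ and applying H\"older/Jensen with the global constraint from Step 2 yields $\operatorname{Cr}(w)\ge (\sin t-t\cos t)^2/(8\pi^2)$. Equality holds for $w_t$ because each step is tight there: isotropy, threshold structure on every line, and $p$-independence.

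\textbf{Main obstacle.} I expect the main difficulty to be Step 3, and specifically the coupling between Steps 2 and 3. The angular inequality alone is false without using that $f_p$ arises from segments: concentrating all edges in one direction kills crossings locally. So the uniform density of endpoints has to enter through the bathtub bound, and the resulting constraint must be in exactly the form Busemann's inequality consumes. My first attempt would be to define the radial function $\rho_p(\theta):=f_p(\theta)^{\alpha}$ for the exponent $\alpha$ making the threshold case homogeneous. I would then verify that this defines (a section of) a star body whose Busemann bound, combined with the line-wise bathtub estimate, closes the argument with matching constants.
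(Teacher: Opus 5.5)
\textbf{There is a genuine gap, at exactly the coupling you flag, and Step 2 as written points the wrong way.}

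Your Step 1 is sound and matches the paper's crossing identity. Your $f_p(\theta)$ is, up to normalization, the flux $g(x,\theta)=(4\pi)^{-2}\int_\Delta\sin(s+r)\,w(\exp_x(sv_{x,\theta}),\exp_x(-rv_{x,\theta}))\,\dd r\,\dd s$. The Jacobian $\sin(s_1+r_1)\sin(s_2+r_2)\abs{\sin(\theta_1-\theta_2)}$ can be computed directly rather than calibrated.

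\textbf{Step 2: direction of the bathtub and the missing incidence flux.} Since $\operatorname{Cr}(w)$ is a nonnegative quadratic form in $g$, you need a \emph{lower} bound on $g$. The threshold configuration you describe (short chords straddling $p$) is the \emph{minimizer} of the flux at fixed mass, so the bathtub gives $f_p\ge\Phi(m_p)$, not $\le$. Upper bounds like $f_p\le M$ can only feed a rearrangement problem whose minimizers concentrate on arcs, as you noticed. That can never produce the isotropic extremal or the sharp constant.

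Nor can the density be transported to $\int\!\int g\,\dd\theta\,\sigma(\dd x)$. That integral equals $\int\!\int w(y,z)\,d(y,z)\,\mu(\dd y)\mu(\dd z)$, a length-weighted density not determined by $e(w)$.

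The missing object is the incidence flux $a(x,\theta)$. It is defined like $g$ but with weight $\sin(s+r)/(s+r)$. Dividing by the chord length cancels the length of the fiber $\{x\in[yz]\}$, so $\int\!\int a\,\dd\theta\,\sigma(\dd x)=e(w)$ exactly. Along each line, set $b(u)=\int_0^u q(z,u-z)\,\dd z\in[0,u]$ and $\dd\nu=(\sin u/u)\,b(u)\,\dd u\le\sin u\,\dd u$. Then $(4\pi)^2a=\nu([0,\pi])$ and $(4\pi)^2g=\int u\,\dd\nu$. Filling from the left gives $g\ge\varphi(a)$, where $\varphi(\alpha)=(\sin\tau-\tau\cos\tau)/(4\pi)^2$ and $1-\cos\tau=16\pi^2\alpha$.

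\textbf{Step 3: the form of Busemann's inequality.} Your belief that the angular inequality fails unless one uses that $f_p$ comes from segments is what blocks the coupling. The inequality the argument consumes is $\iint f(\theta)f(\phi)\abs{\sin(\theta-\phi)}\,\dd\theta\,\dd\phi\ge\pi^{-2}\bigl(\int_0^{2\pi}f^{2/3}\bigr)^3$, valid for \emph{every} measurable $f\ge0$.
\begin{itemize}
\item It compares the energy with the $L^{2/3}$ quasi-norm, not with a power of the total flux $\int f$. Concentration is no counterexample, because $\int f^{2/3}$ collapses together with the energy.
\item It is Busemann's random-triangle inequality (origin as one vertex) for the star set with radial function $f^{1/3}$. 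It must hold without convexity, since $g(x,\cdot)$ can vanish on whole arcs of directions, so your route through convex radial functions does not apply.
\item The paper proves it by symmetrizing $f$ to be $\pi$-periodic, substituting $u=\tan\theta$, and applying H\"older to the quantile function of the resulting density on $\mathbb R$.
\end{itemize}

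\textbf{How the pieces close.} The segment structure enters only through $g\ge\varphi(a)$: concentrating incidence flux in few directions forces long chords and hence superlinear $g$. This gives $g^{2/3}\ge h(a)$ with $h=\varphi^{2/3}$. You still need $h$ to be convex, which is equivalent to $3(1-\tau\cot\tau)\ge\tau^2$ and follows from the partial-fraction expansion of $\cot$. Then apply Busemann at each $x$, H\"older over $\sigma$, and Jensen over ${\mathbb S}^2\times[0,2\pi)$ using $\int\!\int a=e(w)$. This gives $\operatorname{Cr}(w)\ge(16\pi^4)^{-1}(8\pi^2)^3\varphi\bigl(e(w)/(8\pi^2)\bigr)^2=(\sin t-t\cos t)^2/(8\pi^2)$.

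Your outline names the right ingredients, but without these specific pieces it does not yet contain the proof.
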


Observe that the spherical cap of radius $t$ has normalized area precisely $(1-\cos t)/2$, which corresponds to average degree or ordered edge density  above.

\medskip

We now collect the notation and technical lemmas required for the proof. We parametrize crossing geodesic edges by their intersection point and tangent directions.  Fix once and for all a measurable orthonormal frame \((e_1(x),e_2(x))\) for \(T_x{\mathbb S}^2\).  For \(\theta\in[0,2\pi)\), set $v_{x,\theta}:=\cos\theta\,e_1(x)+\sin\theta\,e_2(x).$ Set also
\begin{equation}\label{eq:triangle-domain-main}
        \Delta:=\{(s,r):s>0,\ r>0,\ s+r<\pi\}.
\end{equation}
For $(s,r)\in\Delta,$ the endpoint pair $ \left(\exp_x(sv_{x,\theta}),\exp_x(-rv_{x,\theta})\right)$ is the oriented geodesic segment passing through \(x\) in direction \(\theta\), with \(x\) lying between the two endpoints. After fixing \(w\), define the crossing flux and incidence flux as follows.
\begin{definition}\label{def:normalized-spherical-fluxes}
For \(x\in{\mathbb S}^2\) and \(\theta\in[0,2\pi)\), we define
\begin{align}
        g(x,\theta)&:=
        \frac1{(4\pi)^2}
        \int_{\Delta}
        \sin(s+r)\,
        w\left(\exp_x(sv_{x,\theta}),\exp_x(-rv_{x,\theta})\right)
        \,\dd r\,\dd s \,, \label{eq:normalized-g-def}\\
        a(x,\theta)&:=
        \frac1{(4\pi)^2}
        \int_{\Delta}
        \frac{\sin(s+r)}{s+r}\,
        w\left(\exp_x(sv_{x,\theta}),\exp_x(-rv_{x,\theta})\right)
        \,\dd r\,\dd s\,.\label{eq:normalized-a-def}
\end{align}
\end{definition}

\begin{figure}[t]
\centering
\begin{tikzpicture}[scale=0.7, every node/.style={font=\small}]
        \def\striplen{3.7}
        \def\stripwidth{0.48}

        \begin{scope}[rotate=18]
                \fill[blue!12] (-\striplen,-\stripwidth) rectangle (\striplen,\stripwidth);
        \end{scope}
        \begin{scope}[rotate=-32]
                \fill[red!10] (-\striplen,-\stripwidth) rectangle (\striplen,\stripwidth);
        \end{scope}

        \begin{scope}
                \clip[rotate=18] (-\striplen,-\stripwidth) rectangle (\striplen,\stripwidth);
                \fill[gray!25, rotate=-32] (-\striplen,-\stripwidth) rectangle (\striplen,\stripwidth);
        \end{scope}

        \begin{scope}[rotate=18]
                \foreach \y in {-0.36,-0.24,-0.12,0,0.12,0.24,0.36}
                        \draw[blue!55!black, line width=0.45pt] (-\striplen,\y)--(\striplen,\y);
                \draw[blue!80!black, line width=0.85pt] (-\striplen,-\stripwidth)--(\striplen,-\stripwidth);
                \draw[blue!80!black, line width=0.85pt] (-\striplen,\stripwidth)--(\striplen,\stripwidth);
        \end{scope}
        \begin{scope}[rotate=-32]
                \foreach \y in {-0.36,-0.24,-0.12,0,0.12,0.24,0.36}
                        \draw[red!55!black, line width=0.45pt] (-\striplen,\y)--(\striplen,\y);
                \draw[red!80!black, line width=0.85pt] (-\striplen,-\stripwidth)--(\striplen,-\stripwidth);
                \draw[red!80!black, line width=0.85pt] (-\striplen,\stripwidth)--(\striplen,\stripwidth);
        \end{scope}

        \draw[->, line width=0.85pt] (0,0)--(18:1.55) node[pos=0.94, above, inner sep=1pt] {};
        \draw[->, line width=0.85pt] (0,0)--(-32:1.55) node[pos=0.94, below, inner sep=1pt] {};
        \draw[->] (18:0.68) arc[start angle=18, end angle=-32, radius=0.68];

        \node[blue!70!black, anchor=west, fill=white, inner sep=1pt] at (2.55,0.95) {edges in \(\theta\) direction};
        \node[red!70!black, anchor=west, fill=white, inner sep=1pt] at (1.95,-1.58) {edges in \(\varphi\) direction};
\end{tikzpicture}
\caption{Two streams of edges at a crossing point \(x\).  The two strips overlap in a parallelogram, and the area element of this parallelogram  gives the factor $|\sin(\theta_1 -\theta_2)|$.}
\label{fig:parallelogram}
\end{figure}

The Jacobian change of variables computation, deferred to \Cref{subsec:main-lemmas}, gives the following representation for the crossing functional
\begin{equation}\label{eq:normalized-flux-crossing-representation}
        \operatorname{Cr}(w)=
        \int_{{\mathbb S}^2}\int_0^{2\pi}\int_0^{2\pi}
        g(x,\theta)g(x,\phi)\abs{\sin(\theta-\phi)}
        \,\dd\theta\,\dd\phi\,\sigma(\dd x)\,.
\end{equation}
Pictorially, \Cref{fig:parallelogram} explains why $|\sin(\theta - \phi)|$ shows up in \eqref{eq:normalized-flux-crossing-representation}. Similarly, we get  
\begin{equation}\label{eq:normalized-incidence-identity}
       e(w)= \int_{{\mathbb S}^2}\int_0^{2\pi}a(x,\theta)\,\dd\theta\,\sigma(\dd x)\,.
\end{equation}
We show that one can lower bound $g$ by a function of $a$ pointwise as follows.
\begin{lemma}\label{lem:normalized-bathtub-main}
For almost every \((x,\theta) \in {\mathbb S}^2 \times [0, 2\pi)\), we have 
\begin{equation}\label{eq:normalized-pointwise-bathtub}
    g(x,\theta) \geq \varphi (a (x,\theta))\,,
\end{equation}
where $\varphi : [0,1/(8\pi^2)] \to {\mathbb R}$ is defined by 
\begin{equation}\label{eq:normalized-bathtub-profile}
        \varphi(\alpha):=
        \frac{\sin \tau(\alpha)-\tau(\alpha)\cos \tau(\alpha)}{(4\pi)^2}\,,
\end{equation}
where $\tau(\alpha) \in [0,\pi]$ is determined by $16\pi^2 \alpha =1-\cos  \tau(\alpha).$
\end{lemma}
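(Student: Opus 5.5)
The plan is to fix a point \((x,\theta)\) and reduce both \(g(x,\theta)\) and \(a(x,\theta)\) to one-dimensional integrals over the length \(L=s+r\) of the geodesic segment through \(x\). The inequality then becomes a weighted bathtub (rearrangement) problem that can be solved by hand.

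\emph{Reduction.} Fix \((x,\theta)\) outside a null set. By Fubini, the map
\[
u(s,r):=w\bigl(\exp_x(sv_{x,\theta}),\exp_x(-rv_{x,\theta})\bigr)
\]
is measurable on \(\Delta\) with values in \([0,1]\). Change variables \((s,r)\mapsto(s,L)\) with \(L=s+r\in(0,\pi)\) and \(0<s<L\); this map has unit Jacobian. Define
\[
h(L):=\frac1L\int_0^L u(s,L-s)\,\dd s\in[0,1].
\]
With \(c:=(4\pi)^{-2}\), the two fluxes become
\[
g(x,\theta)=c\int_0^\pi L\sin L\,h(L)\,\dd L,\qquad a(x,\theta)=c\int_0^\pi \sin L\,h(L)\,\dd L .
\]
In particular \(0\le a\le c\int_0^\pi\sin L\,\dd L=2c=1/(8\pi^2)\), so \(\varphi(a(x,\theta))\) is well defined.

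\emph{Bathtub step.} We now minimize \(\int L\,h\,\dd\nu\) over measurable \(h\) with \(0\le h\le 1\) and \(\int h\,\dd\nu=a/c\) fixed, where \(\nu=\sin L\,\dd L\) is a nonnegative measure on \([0,\pi]\). Because the weight \(L\) is increasing, the minimizer should put all its mass at small \(L\), i.e. \(h^*=\II{[0,\tau]}\). The threshold \(\tau\) is chosen so that \(\int_0^\tau\sin L\,\dd L=1-\cos\tau=a/c=16\pi^2a\), which is exactly \(\tau=\tau(a)\) from the statement.

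To verify optimality, note the pointwise inequality \((L-\tau)\bigl(h(L)-h^*(L)\bigr)\ge 0\) for all \(L\in[0,\pi]\):
\begin{itemize}
\item for \(L<\tau\) we have \(h-h^*=h-1\le0\) and \(L-\tau<0\);
\item for \(L>\tau\) we have \(h-h^*=h\ge0\) and \(L-\tau>0\).
\end{itemize}
Integrating this inequality against \(\sin L\,\dd L\ge0\) on \([0,\pi]\), and using that \(h\) and \(h^*\) have the same \(\nu\)-mass, gives
\[
\int L\sin L\,h\,\dd L\ \ge\ \int_0^\tau L\sin L\,\dd L=\sin\tau-\tau\cos\tau .
\]
Multiplying by \(c\) yields \(g(x,\theta)\ge\varphi(a(x,\theta))\).

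\emph{Main obstacle.} The only delicate points are measure-theoretic, and I expect these to be the main (mild) obstacle:
\begin{itemize}
\item justifying that for almost every \((x,\theta)\) the slice \(u\) is measurable, and that the change of variables and Fubini apply;
\item handling the measurable frame \(e_1,e_2\), which enters only through \(v_{x,\theta}\), and the measure-zero degenerate lengths \(L\in\{0,\pi\}\).
\end{itemize}
Both are standard once \(w\) is measurable on \({\mathbb S}^2\times{\mathbb S}^2\), since \((x,\theta,s,r)\mapsto(\exp_x(sv_{x,\theta}),\exp_x(-rv_{x,\theta}))\) is a measurable map. The geometric content is entirely the averaging over \(s\) that produces \(h\), followed by the one-dimensional bathtub inequality above.
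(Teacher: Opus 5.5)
Your proposal is correct and follows the paper's proof: you use the same change of variables to $L=s+r$, and your $h(L)$ is the paper's $b(u)/u$, so $h(L)\sin L\,\dd L$ is exactly the paper's dominated measure $\dd\nu$. You then apply the same bathtub principle with threshold $\tau(a)$. Your pointwise inequality $(L-\tau)(h-h^*)\ge 0$ makes explicit the ``fill from the left'' optimality step that the paper only asserts, and your check that $a\le 1/(8\pi^2)$ confirms that $\varphi(a)$ is defined.
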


We will also use that $\alpha \mapsto  \varphi(\alpha)^{2/3}$ is convex on its domain. The proof of \Cref{lem:normalized-bathtub-main} and the convexity calculation are deferred to \Cref{subsec:main-lemmas}. Finally, the most crucial part of the argument will be a version of Busemann's inequality that we will prove in \Cref{sec:busemann}.

\begin{lemma}[Busemann inequality]\label{lem:busemann-circle-main}
For every measurable \(f:[0,2\pi)\to[0,\infty)\),
\begin{equation}\label{eq:busemann-circle-main}
        \int_0^{2\pi}\int_0^{2\pi}
        f(\theta)f(\phi)\abs{\sin(\theta-\phi)}
        \,\dd\theta\,\dd\phi
        \ge
        \frac1{\pi^2}
        \left(\int_0^{2\pi}f(\theta)^{2/3}\,\dd\theta\right)^3.
\end{equation}
\end{lemma}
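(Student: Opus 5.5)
The plan is to interpret the left-hand side of \eqref{eq:busemann-circle-main} as the area of a planar zonoid. We then apply the classical affine isoperimetric inequality to that zonoid, and finally compare its affine perimeter with \(\int f^{2/3}\).

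\textbf{Reduction and the zonoid.} By monotone convergence on both sides (truncate \(f\) at level \(M\) and let \(M\to\infty\)) it suffices to treat bounded \(f\). By approximation in \(L^1\), which controls both sides continuously for bounded \(f\), we may further take \(f\) continuous and strictly positive. Let \(v_\theta=(\cos\theta,\sin\theta)\) and define the zonoid
\[
Z:=\int_0^{2\pi} f(\theta)\,[-v_\theta,v_\theta]\,\dd\theta .
\]
This is the Minkowski integral, i.e.\ the limit of zonotopes \(\sum_i f(\theta_i)\Delta\theta\,[-v_{\theta_i},v_{\theta_i}]\). For a zonotope \(\sum_i [-\ell_i v_{\theta_i},\ell_i v_{\theta_i}]\) the area is \(\sum_{i<j}4\ell_i\ell_j|\sin(\theta_i-\theta_j)|=2\sum_{i\ne j}\ell_i\ell_j|\sin(\theta_i-\theta_j)|\). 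Passing to the limit gives
\[
\operatorname{area}(Z)=2\int_0^{2\pi}\!\!\int_0^{2\pi} f(\theta)f(\phi)|\sin(\theta-\phi)|\,\dd\theta\,\dd\phi,
\]
so the left-hand side of \eqref{eq:busemann-circle-main} equals \(\operatorname{area}(Z)/2\).

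\textbf{Curvature of \(\partial Z\) and the affine isoperimetric inequality.} The boundary of \(Z\) is traversed by laying the segments in angular order. At the boundary point with outer normal angle \(\psi\), the tangent direction is \(\psi\pm\pi/2\). The segments parallel to that direction are those with \(\theta\) and \(\theta+\pi\), where \(\theta=\psi+\pi/2\), and each contributes length \(2f\,\dd\theta\). Hence the radius of curvature, as a function of the normal angle, is
\[
\rho(\psi)=2\bigl(f(\theta)+f(\theta+\pi)\bigr),\qquad \theta=\psi+\pi/2 .
\]
The affine isoperimetric inequality (Blaschke) states that \(\Omega(K)^3\le 8\pi^2\operatorname{area}(K)\) for every planar convex body \(K\), where \(\Omega(K)=\int_0^{2\pi}\rho^{2/3}\,\dd\psi\). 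It therefore gives \(\operatorname{area}(Z)\ge \Omega(Z)^3/(8\pi^2)\). For smooth strictly convex \(Z\), which holds when \(f\) is continuous and positive, this is classical and we simply cite it.

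\textbf{Comparing \(\Omega(Z)\) with \(I:=\int_0^{2\pi}f^{2/3}\).} Write \(a=f(\theta)\) and \(b=f(\theta+\pi)\). Concavity of \(s\mapsto s^{2/3}\) gives \((a+b)^{2/3}\ge 2^{-1/3}(a^{2/3}+b^{2/3})\). Hence
\[
\Omega(Z)=2^{2/3}\cdot 2\int_0^{\pi}(a+b)^{2/3}\,\dd\theta\ \ge\ 2^{2/3}\cdot2\cdot2^{-1/3}\int_0^\pi (a^{2/3}+b^{2/3})\,\dd\theta = 2^{4/3} I .
\]
Therefore \(\operatorname{area}(Z)\ge 16I^3/(8\pi^2)=2I^3/\pi^2\), and the left-hand side of \eqref{eq:busemann-circle-main} is at least \(I^3/\pi^2\), as claimed. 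Equality holds for constant \(f\), where \(Z\) is a disk and \(a=b\).

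\textbf{Main obstacle.} The main point needing care is the curvature identification for a zonoid together with the approximation steps. We must justify that the area formula and \(\Omega(Z)\) pass to the limit from zonotopes, or equivalently work directly with the smooth support function \(h_Z(u)=\int f(\theta)|\langle v_\theta,u\rangle|\,\dd\theta\). In that formulation \(h_Z+h_Z''=\rho\) can be computed by differentiating twice, using \((|\sin|)''+|\sin| = 2\sum\delta_{k\pi}\), which yields \(\rho=2(f(\theta)+f(\theta+\pi))\) directly. This is the route I would take, since it avoids the limiting argument for the boundary.
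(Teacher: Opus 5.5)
Your proof is correct, and it takes a genuinely different route from the paper's. The paper replaces $f$ by its $\pi$-periodic average $\bar f$ and then uses the chart $u=\tan\theta$ to turn the circle inequality into the line inequality of \Cref{lem:busemann-line}. It proves that line inequality by hand: the layer-cake identity $2\int Q(1-Q)$, the quantile substitution $y(t)=q(x(t))^{-1/3}$, and H\"older against the weight $(t(1-t))^{-1/2}$.

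You instead read the left-hand side as half the area of the zonoid $Z$, whose support function is $h_Z(\psi)=\int f(\theta)\abs{\cos(\theta-\psi)}\,\dd\theta$ and whose curvature radius is $\rho(\psi)=2\bigl(f(\psi+\pi/2)+f(\psi-\pi/2)\bigr)$. You then import Blaschke's affine isoperimetric inequality $\Omega^3\le 8\pi^2\operatorname{area}$. Your concavity step $(a+b)^{2/3}\ge 2^{-1/3}(a^{2/3}+b^{2/3})$ is exactly the paper's symmetrization $f\mapsto\bar f$, since $Z$ only depends on $\bar f$.

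The constants all check out. Your preferred support-function route also gives the area identity directly, with no zonotope limit: $\tfrac12\int_0^{2\pi} h_Z\,\rho\,\dd\psi$ equals twice the left-hand side. (The kernel there is $\abs{\cos}$ rather than $\abs{\sin}$, which is a harmless shift.)

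What your route buys is conceptual. Every centrally symmetric planar convex body has the form $Z$ for a suitable $\pi$-periodic $f$. Hence, for such $f$, the lemma is equivalent to the affine isoperimetric inequality for centrally symmetric planar bodies. The extremizers also come essentially for free: ellipses, i.e.\ $f$ proportional to a positive definite quadratic form in $v_\theta$ raised to the power $-3/2$.

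The price is that you rely on a cited classical theorem, and you need the approximation down to continuous positive $f$ so that $Z$ is $C^2_+$. Your truncation and $L^1$-approximation steps handle this correctly. The paper's argument is self-contained and elementary, in effect a short proof of that symmetric affine isoperimetric inequality. Its H\"older equality case recovers the same ellipse densities.
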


We now combine all the ingredients above to prove \Cref{thm:continuous-spherical-crossing-main}.

\begin{proof}[Proof of \Cref{thm:continuous-spherical-crossing-main}]
The crossing representation \eqref{eq:normalized-flux-crossing-representation} and Busemann's inequality \eqref{eq:busemann-circle-main} give
\begin{align*}
    \operatorname{Cr}(w)&=\int_{{\mathbb S}^2}\int_0^{2\pi}\int_0^{2\pi}
        g(x,\theta)g(x,\phi)\abs{\sin(\theta-\phi)}
        \,\dd\theta\,\dd\phi\,\sigma(\dd x) \\
        &\geq  \frac1{\pi^2}\int_{{\mathbb S}^2} 
        \left(\int_0^{2\pi}g(x,\theta)^{2/3}\,\dd\theta\right)^3 \sigma(\dd x)\,.
\end{align*}
H\"older's inequality on \(({\mathbb S}^2,\sigma)\) with \(\sigma({\mathbb S}^2)=4\pi\) yields
\begin{align*}
    \operatorname{Cr}(w)&\geq  \frac1{\pi^2}\int_{{\mathbb S}^2} \left(\int_0^{2\pi}g(x,\theta)^{2/3}\,\dd\theta\right)^3 \sigma(\dd x) \geq \frac1{16\pi^4}\left(\int_{{\mathbb S}^2} \int_0^{2\pi}g(x,\theta)^{2/3}\,\dd\theta \,\sigma(\dd x) \right)^3.
\end{align*}
Let $h(\alpha) = \varphi(\alpha)^{2/3}$. Then, the pointwise estimate \Cref{lem:normalized-bathtub-main} gives 
\[
        g(x,\theta)^{2/3}\ge h(a(x,\theta))\,.
\]
By the convexity of \(h\), Jensen's inequality on the space
\({\mathbb S}^2\times[0,2\pi)\) implies
\begin{equation}\label{eq:main-proof-jensen-step}   \int_{{\mathbb S}^2}\int_0^{2\pi}g(x,\theta)^{2/3}\,\dd\theta\,\sigma(\dd x)
        \ge
        8\pi^2\,
        h\left(
        \frac1{8\pi^2}
        \int_{{\mathbb S}^2}\int_0^{2\pi}a(x,\theta)\,\dd\theta\,\sigma(\dd x)
        \right)  =8\pi^2\,h\left(\frac{e(w)}{8\pi^2}\right)\,,
\end{equation}
where the last step uses the identity \eqref{eq:normalized-incidence-identity}.
Let \(t\in[0,\pi]\) be determined by \(\cos t=1-2e(w)\).  The definition of \(\varphi\) gives
\[
        h\left(\frac{e(w)}{8\pi^2}\right)
        =
        \left(\frac{\sin t-t\cos t}{(4\pi)^2}\right)^{2/3}\,.
\]
Substituting this into the preceding bound and using \eqref{eq:main-proof-jensen-step}, we obtain
\[
        \operatorname{Cr}(w)
        \ge
        \frac1{16\pi^4}
        (8\pi^2)^3
        \left(\frac{\sin t-t\cos t}{(4\pi)^2}\right)^2
        =
        \frac{(\sin t-t\cos t)^2}{8\pi^2}\,.
\]
This proves the lower bound. It remains to check sharpness.  For density \(w(x,y)=\II{\{d(x,y)\le t\}}\), we have \(e(w)=(1-\cos t)/2\).  Moreover, we have that
\[
        g(x,\theta)
        =
        \frac1{(4\pi)^2}\int_0^t u\sin u\,\dd u
        =
        \frac{\sin t-t\cos t}{(4\pi)^2}
\]
independently of \((x,\theta)\).  Since $\sigma({\mathbb S}^2)=4\pi$ and 
\[
        \int_0^{2\pi}\int_0^{2\pi}\abs{\sin(\theta-\phi)}\,\dd\theta\,\dd\phi=8\pi\,,
\]
the flux formula \eqref{eq:normalized-flux-crossing-representation} gives
\[
        \operatorname{Cr}(w)
        =32\pi^2
        \left(\frac{\sin t-t\cos t}{(4\pi)^2}\right)^2
        =
        \frac{(\sin t-t\cos t)^2}{8\pi^2}\,.
\]
Hence equality is attained by \(w\).
\end{proof}

\subsection{Proofs of Lemmas}\label{subsec:main-lemmas}
We now give the deferred details for the geometric identities and the one dimensional rearrangement estimate used in the proof of \Cref{thm:continuous-spherical-crossing-main}.  The first two lemmas are change of variables computations for parametrizing a geodesic segment by an interior point, a tangent direction, and the distances to the two endpoint.  The third lemma reduces the pointwise estimate \(g\ge \varphi(a)\) on each geodesic fiber to the bathtub principle.  

\begin{lemma}[Crossing identity]\label{lem:spherical-crossing-jacobian-main}
Let \(w:{\mathbb S}^2\times{\mathbb S}^2\to[0,1]\) be measurable and symmetric, and let \(g\) be defined by \eqref{eq:normalized-g-def}.  Then, we have that
\begin{equation*}
        \operatorname{Cr}(w)=
        \int_{{\mathbb S}^2}\int_0^{2\pi}\int_0^{2\pi}
        g(x,\theta)g(x,\phi)\abs{\sin(\theta-\phi)}
        \,\dd\theta\,\dd\phi\,\sigma(\dd x)\,.
\end{equation*}
\end{lemma}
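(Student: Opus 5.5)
We prove the identity by a change of variables. Each crossing configuration $(x_1,x_2,x_3,x_4)$ is parametrized by its crossing point $x$, the two directions $\theta,\phi$, and the four distances from $x$ to the endpoints. The Jacobian of this map should factor as a product of two single-geodesic Jacobians times $\abs{\sin(\theta-\phi)}$.

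\textbf{Step 1: a single geodesic segment.} Consider the map
\[
\Psi:(x,\theta,s,r,u)\mapsto(y,z),\qquad y=\exp_x(s v_{x,\theta}),\quad z=\exp_x(-r v_{x,\theta}).
\]
Here $u$ is an auxiliary coordinate, which we explain first. Parametrize an oriented great circle by its direction, and let $\ell$ be the arclength position along it. A point $x$ on the segment $[yz]$ is $\exp_y(\ell\,\cdot)$ with $\ell=s$. We compute the pushforward of $\sigma\otimes\sigma$ in the coordinates (great circle, $\ell_y$, $\ell_z$). Fix $y$ and write $z$ in polar coordinates about $y$, so that $\sigma(\dd z)=\sin L\,\dd L\,\dd\psi$ with $L=s+r$. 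Now trade the coordinates $y\in{\mathbb S}^2$ and $\psi$ for the crossing point $x$ on the segment, the direction $\theta$ at $x$, and the distance $s$.

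The key fact is the Crofton/Blaschke--Petkantschin formula on ${\mathbb S}^2$: $\sigma(\dd y)\,\dd\psi = \dd\ell\,\dd G$, where $\dd G$ is the kinematic measure on oriented great circles. Also $\sigma(\dd x)\,\dd\theta$ splits as $\dd G\,\dd\ell_x$, by the same formula. Combining these,
\[
\sigma(\dd y)\,\sigma(\dd z)=\sin(s+r)\,\dd s\,\dd r\,\dd G\,\dd\ell_y .
\]
We then add the point $x$ on the segment and the direction $\theta$ of the segment at $x$. Since $\sigma(\dd x)\,\dd\theta=\dd G\,\dd\ell_x$, the total measure $\dd G\,\dd\ell_x\,\dd s\,\dd r\,\sin(s+r)$ counts each pair $(y,z)$ with weight equal to the length $s+r$ of the segment it spans. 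This is exactly why $a$ carries the factor $1/(s+r)$: the incidence identity \eqref{eq:normalized-incidence-identity} follows from the same computation.

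\textbf{Step 2: two crossing segments.} For the crossing functional we parametrize a pair of segments $(x_1x_2)$ and $(x_3x_4)$ that cross at $x$. We use the unordered segment $(x_1,x_2)$ with parameters $(x,\theta,s,r)$ and pick $\phi$ for the other segment. Then we trade the second segment's position coordinate $\ell$ for the location of the crossing.

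More concretely, we use the measure identity for two great circles $G_1,G_2$ meeting at $x$, together with arclength coordinates $\ell_1,\ell_2$ giving the position of $x$ along each. This is the spherical analogue of the planar fact that $\dd G_1\,\dd G_2$ pairs with $\sigma(\dd x)\,\dd\theta\,\dd\phi\,\abs{\sin(\theta-\phi)}$ (Poincaré formula / the parallelogram of \Cref{fig:parallelogram}). We will verify this locally. Write each geodesic near $x$ as $(\text{normal offset }p_i,\text{direction}~\theta_i)$, so that $\dd G_i=\dd p_i\,\dd\theta_i$. The crossing point $x$ depends on $(p_1,p_2)$ through a linear map with determinant $1/\abs{\sin(\theta_1-\theta_2)}$, which gives $\dd p_1\,\dd p_2=\abs{\sin(\theta-\phi)}\,\sigma(\dd x)$.

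\textbf{Step 3: assembling.} Putting Steps 1 and 2 together,
\[
\prod_{i=1}^4\sigma(\dd x_i)\,\II{\{\text{cross}\}}=\abs{\sin(\theta-\phi)}\,\sigma(\dd x)\,\dd\theta\,\dd\phi\prod_{j=1,2}\sin(s_j+r_j)\,\dd s_j\,\dd r_j .
\]
Under this parametrization, a crossing configuration corresponds to a unique $x$, with $\theta,\phi$ mod orientation. Dividing by $(4\pi)^4$ and using the normalization $(4\pi)^{-2}$ in each $g$ yields the identity.

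\textbf{Orientation count.} We must check the count of orientations. Taking $\theta\in[0,2\pi)$ with $(s,r)\in\Delta$ counts each unordered segment twice, namely as $(\theta,s,r)$ and as $(\theta+\pi,r,s)$. However, the ordered quadruple $(x_1,x_2)$ fixes which endpoint lies at direction $\theta$, so each ordered pair is counted exactly once. The degenerate sets (antipodal points, coincident circles) are null.

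The main obstacle is getting the constants exactly right: the $\sin(s+r)$ Jacobian for $\exp$ on ${\mathbb S}^2$, and the absence of extra factors of 2. To settle these, we will check the final formula on the threshold $w$, using $\int\!\!\int\abs{\sin(\theta-\phi)}=8\pi$, against the independent computation of \cite{CzabarkaSinggihSzekelyWang}.
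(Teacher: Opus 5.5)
Your proof is correct. It uses the same parametrization $(x,\theta,\phi,s_1,r_1,s_2,r_2)$ as the paper and reaches the same Jacobian $\sin(s_1+r_1)\sin(s_2+r_2)\abs{\sin(\theta-\phi)}$. What differs is how you obtain the factors.

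\textbf{Comparison of routes.} The paper does a single first-order computation. A base-point change $\xi$ and angular changes $\eta_i$ displace the endpoints normally by $\cos s_i\langle\xi,n_i\rangle+\eta_i\sin s_i$ and $\cos r_i\langle\xi,n_i\rangle-\eta_i\sin r_i$. This gives determinant $\sin(s_i+r_i)$ for each segment, and $\abs{\det(n_1,n_2)}$ for the remaining base-point freedom.

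You instead assemble the measure from integral-geometric identities:
\begin{itemize}
\item polar coordinates about an endpoint give $\sin L$;
\item the Liouville decomposition $\sigma(\dd y)\,\dd\psi=\dd G\,\dd\ell$ converts base points to great circles;
\item the local Poincar\'e formula supplies $\abs{\sin(\theta-\phi)}$.
\end{itemize}
Your route makes each factor conceptually transparent, and it explains the weight $1/(s+r)$ in $a$ as a fibre length. The paper's route is one self-contained calculation, with no normalization of $\dd G$ to track.

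Your normalization $\sigma(\dd x)\,\dd\theta=\dd G\,\dd\ell$ already fixes every constant. So the planned check on the threshold $w$ is unnecessary, and it could only confirm the identity, not prove it.

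\textbf{Slips to fix.}
\begin{itemize}
\item The display $\sigma(\dd y)\,\sigma(\dd z)=\sin(s+r)\,\dd s\,\dd r\,\dd G\,\dd\ell_y$ has one differential too many. It should read $\sin L\,\dd L\,\dd G\,\dd\ell_y$. The fifth variable enters only when you adjoin $\dd\ell_x$ and pass to $(\ell_x,s,r)$, which has unit Jacobian.
\item The auxiliary coordinate $u$ in $\Psi$ is never explained or used; drop it.
\item The ordered endpoints determine $\theta$ and $\phi$ exactly, not ``mod orientation.'' Your own orientation count shows this.
\end{itemize}
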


\begin{proof}
For \(i=1,2\) and $(s_i,r_i)\in\Delta $, set $ y_i^+=\exp_x(s_i v_{x,\theta_i})$ and $y_i^-=\exp_x(-r_i v_{x,\theta_i})$. The map
\[
        (x,\theta_1,\theta_2,s_1,r_1,s_2,r_2)
        \longmapsto (y_1^+,y_1^-,y_2^+,y_2^-)
\]
parametrizes, up to null sets, the ordered endpoint quadruples whose two geodesic segments $[y_1^-y_1^+]$ and $[y_2^-y_2^+]$ cross transversely at \(x\). Here \(\theta_1\) and \(\theta_2\) denote the directions through \(x\) of the geodesic segments \([y_1^-y_1^+]\) and \([y_2^-y_2^+]\), respectively, while \(s_i\) and \(r_i\) represent the signed distances from \(x\) to the corresponding endpoints. It now remains to compute its Jacobian. Work locally in a smooth oriented orthonormal frame and write
\[
        v_i:=v_{x,\theta_i}\,,\qquad n_i:=\partial_{\theta_i}v_{x,\theta_i}\,.
\]
For one segment, if \(\xi\in T_x{\mathbb S}^2\) is the base point change and \(\eta_i\) is the angular change, then the changed endpoint coordinates are
\[
        \cos s_i\,\langle\xi,n_i\rangle+\eta_i\sin s_i\,,
        \qquad
        \cos r_i\,\langle\xi,n_i\rangle-\eta_i\sin r_i\,.
\]
Hence the change determinant for the \(i\)-th segment is
\[
        \abs{\det\begin{pmatrix}
        \cos s_i & \sin s_i\\
        \cos r_i & -\sin r_i
        \end{pmatrix}}
        =\sin(s_i+r_i)\,,
\]
while \(s_i\) and \(r_i\) give the longitudinal directions.  The remaining base point change of coordinates contributes $\abs{\det(n_1,n_2)}=\abs{\sin(\theta_1-\theta_2)}.$ Therefore,
\begin{equation}\label{eq:spherical-crossing-jacobian-main}
        J=
        \sin(s_1+r_1)\sin(s_2+r_2)
        \abs{\sin(\theta_1-\theta_2)}\,.
\end{equation}
Applying this to \eqref{eq:continuous-crossing-functional} and using \(\mu=\sigma/(4\pi)\) gives
\begin{align*}
        \operatorname{Cr}(w)
        &=\frac{1}{(4\pi)^{4}}
        \int_{{\mathbb S}^2}\int_0^{2\pi}\int_0^{2\pi}
        \int_{\Delta}\int_{\Delta}
        w(y_1^+,y_1^-)w(y_2^+,y_2^-) J
        \,\dd r_1\,\dd s_1\,\dd r_2\,\dd s_2
        \,\dd\theta_1\,\dd\theta_2\,\sigma(\dd x) \\
        &=\int_{{\mathbb S}^2}\int_0^{2\pi}\int_0^{2\pi}
        g(x,\theta_1)g(x,\theta_2)
        \abs{\sin(\theta_1-\theta_2)}
        \,\dd\theta_1\,\dd\theta_2\,\sigma(\dd x)\,,
\end{align*}
which proves the claim.
\end{proof}

\begin{lemma}[Edge identity]\label{lem:incidence-identity-main}
Let \(w:{\mathbb S}^2\times{\mathbb S}^2\to[0,1]\) be measurable and symmetric, and let \(a\) be defined by \eqref{eq:normalized-a-def}.  Then, we have that
\[
        e(w)=\int_{{\mathbb S}^2}\int_0^{2\pi}a(x,\theta)\,\dd\theta\,\sigma(\dd x)\,.
\]
\end{lemma}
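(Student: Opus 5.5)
We prove the edge identity by a change of variables in $e(w)$, in the spirit of \Cref{lem:spherical-crossing-jacobian-main}. The approach is to parametrize an ordered pair $(y^+,y^-)$ of non-antipodal points by a point $x$ on the geodesic segment $[y^-y^+]$, the direction $\theta$ of that segment at $x$, and the distances $(s,r)\in\Delta$ from $x$ to $y^+$ and $y^-$. The fibre of this map over a fixed pair $(y^+,y^-)$ is one-dimensional: $x$ may be any point of the segment. The factor $1/(s+r)$ in the definition of $a$ compensates for this. We integrate over $x$ along the segment, of length $L=s+r=d(y^+,y^-)$, and divide by $L$.

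\textbf{Step 1.} We compute the Jacobian of
\[
  \Psi:(x,\theta,s,r)\longmapsto \bigl(\exp_x(sv_{x,\theta}),\exp_x(-rv_{x,\theta})\bigr).
\]
Its domain ${\mathbb S}^2\times[0,2\pi)\times\Delta$ is $5$-dimensional, while $({\mathbb S}^2)^2$ is $4$-dimensional. So instead we use the coarea formula, or equivalently decompose $x=\gamma(u)$ along the geodesic. Concretely, we first record that
\[
  (y^+,y^-)\longleftrightarrow(\text{unoriented great circle } C,\ \text{positions } p^\pm \text{ on } C)
\]
has density proportional to $\sin(s+r)$. This is the standard Blaschke--Petkantschin-type formula on ${\mathbb S}^2$, and it matches the one-segment factor $\sin(s_i+r_i)$ already derived in the proof of \Cref{lem:spherical-crossing-jacobian-main}.

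Next, a point $x$ on $C$ together with its direction $\theta$ is the same data as a pair (great circle, position on it). By Santaló's formula the Liouville measure satisfies $\sigma(\dd x)\,\dd\theta = \dd C\,\dd u$, with $u$ the arclength position of $x$. Combining the two descriptions gives
\[
  \sigma(\dd x)\,\dd\theta\,\dd s\,\dd r=\dd C\,\dd u\,\dd s\,\dd r,
\]
with $y^\pm$ located at positions $u+s$ and $u-r$ on $C$. Changing variables from $(u,s,r)$ to $(p^+,p^-,u)$, with $u$ ranging over the segment, has unit Jacobian.

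\textbf{Step 2.} We substitute into the right-hand side:
\[
  \int_{{\mathbb S}^2}\int_0^{2\pi} a\,\dd\theta\,\sigma(\dd x)
  =\frac{1}{(4\pi)^2}\int \dd C\,\dd p^+\,\dd p^-\,\frac{\sin L}{L}\,w(y^+,y^-)\int_{[y^-y^+]}\dd u .
\]
The inner integral over $u$ equals $L$, which cancels the $1/L$. What remains is
\[
  \frac1{(4\pi)^2}\int \sin(L)\,w\,\dd C\,\dd p^+\,\dd p^- .
\]
By Step 1 this is $\frac{1}{(4\pi)^2}\int\!\!\int w\,\sigma(\dd y^+)\,\sigma(\dd y^-)=e(w)$.

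\textbf{Main obstacle.} The main difficulty is getting the normalizing constants right in Step 1, so that the constant in $\sigma\otimes\sigma=c\,\sin L\,\dd C\,\dd p^+\dd p^-$ combines with the Santaló normalization to give exactly $1$. The cleanest way to avoid tracking constants is a direct local Jacobian computation, as in \Cref{lem:spherical-crossing-jacobian-main}. We fix the transversal component of $\xi$ by restricting $x$ to the segment and use coordinates $(\langle\xi,n\rangle,\eta)$ for the normal motion. As before, this gives the determinant $\sin(s+r)$, and the two longitudinal coordinates $s,r$ contribute $1$. Since only the $u$-direction is redundant, this yields the $5\to4$ coarea statement with fibre measure $\dd u$.

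As a consistency check, we test the result on $w\equiv1$. Then $a=\frac{1}{16\pi^2}\int_\Delta\frac{\sin(s+r)}{s+r}\,\dd r\,\dd s=\frac{1}{16\pi^2}\int_0^\pi\sin L\,\dd L=\frac{1}{8\pi^2}$. Integrating over $\sigma\otimes\dd\theta$, of total mass $8\pi^2$, gives $1=e(1)$, confirming the constant. Finally, the degenerate configurations (antipodal pairs, or $y^+=y^-$) form a null set and can be ignored.
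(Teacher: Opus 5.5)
Your proposal is correct and is essentially the paper's argument. The paper likewise uses the one-segment normal Jacobian $\sin(s+r)$ from the crossing identity and integrates over the fibre $\{x\in[yz]\}$ of length $s+r$, which cancels the $1/(s+r)$ in $a$. Your Blaschke--Petkantschin/Santal\'o factorization $\sigma(\dd x)\,\dd\theta\,\dd s\,\dd r=\dd C\,\dd p^+\,\dd p^-\,\dd u$ is an explicit Fubini form of that coarea step. There $C$ should be taken \emph{oriented}, not unoriented as you wrote; the constraint $(s,r)\in\Delta$ then fixes the orientation, and your $w\equiv 1$ test legitimately pins down the remaining universal constant.
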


\begin{proof}
From the proof of \Cref{lem:spherical-crossing-jacobian-main}, the normal Jacobian of the map
\[
        (x,\theta,s,r)\longmapsto
        \left(\exp_x(sv_{x,\theta}),\exp_x(-rv_{x,\theta})\right)
\]
is precisely \(\sin(s+r)\).  For fixed endpoints \((y,z)\), the fiber consists of the points \(x\) on the segment \([yz]\), and its length is \(d(y,z)=s+r\).  This gives
\begin{equation*}\int_{{\mathbb S}^2}\int_0^{2\pi}a(x,\theta)\,\dd\theta\,\sigma(\dd x)=
        \frac1{(4\pi)^2}
        \int_{{\mathbb S}^2}\int_{{\mathbb S}^2}w(y,z)\,\sigma(\dd y)\sigma(\dd z)=e(w)\,.
        \qedhere 
\end{equation*}
\end{proof}
\begin{lemma}\label{lem:spherical-pointwise-inequality}
Let \(w:{\mathbb S}^2\times{\mathbb S}^2\to[0,1]\) be measurable and symmetric, and let \(g\) and $a$ be defined by equations \eqref{eq:normalized-g-def}-\eqref{eq:normalized-a-def} and $\varphi$ be as  \eqref{eq:normalized-bathtub-profile}. Then, for almost every \((x,\theta) \in {\mathbb S}^2 \times [0,2\pi)\), we have
\begin{equation*}
    g(x,\theta)\geq \varphi(a(x,\theta))\,.
\end{equation*}
\end{lemma}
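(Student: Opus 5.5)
Fix $(x,\theta)$ and set $W(s,r):=w(\exp_x(sv_{x,\theta}),\exp_x(-rv_{x,\theta}))\in[0,1]$ on $\Delta$. This is measurable in $(s,r)$ for almost every $(x,\theta)$ by Fubini, since the map $(x,\theta,s,r)\mapsto(\exp_x(sv),\exp_x(-rv))$ is smooth. I would change variables to $u=s+r\in(0,\pi)$ and $s\in(0,u)$, with unit Jacobian. Define the averaged profile
\[
\rho(u):=\int_0^u W(s,u-s)\,\dd s\in[0,u].
\]
Then
\[
(4\pi)^2 g=\int_0^\pi \rho(u)\sin u\,\dd u,\qquad (4\pi)^2 a=\int_0^\pi \rho(u)\frac{\sin u}{u}\,\dd u .
\]
Writing $\rho(u)=u\,\beta(u)$ with $\beta\in[0,1]$, these become
\[
(4\pi)^2 g=\int_0^\pi \beta(u)\,u\sin u\,\dd u,\qquad (4\pi)^2a=\int_0^\pi\beta(u)\sin u\,\dd u .
\]
The problem is therefore a one-dimensional bathtub problem: minimize $\int\beta\,u\,\dd\nu$ subject to $\int\beta\,\dd\nu=16\pi^2 a$, where $\dd\nu=\sin u\,\dd u$ on $[0,\pi]$ and $0\le\beta\le1$.

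By the bathtub principle, the weight $u$ is increasing, so the minimizer puts $\beta=1$ on the sublevel set $[0,\tau]$. Here $\tau$ is chosen so that $\nu([0,\tau])=1-\cos\tau=16\pi^2a$, which is exactly $\tau(a)$. The minimum value is $\int_0^\tau u\sin u\,\dd u=\sin\tau-\tau\cos\tau$, so $g\ge\varphi(a)$.

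I would prove the bathtub step directly. For any admissible $\beta$,
\[
\int(\beta-\mathbf 1_{[0,\tau]})(u-\tau)\,\dd\nu\ge0,
\]
because the integrand is nonnegative: $\beta-1\le0$ where $u<\tau$, and $\beta\ge0$ where $u>\tau$. Combining this with $\int(\beta-\mathbf 1_{[0,\tau]})\,\dd\nu=0$ gives the claim.

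Two bookkeeping checks remain. First, $a$ lies in the domain of $\varphi$: since $\beta\le1$, we have $16\pi^2 a\le\int_0^\pi\sin u\,\dd u=2$, so $a\le1/(8\pi^2)$, and $\tau(a)$ is well defined because $1-\cos$ is a bijection $[0,\pi]\to[0,2]$. Second, the measurability and almost-everywhere qualifiers come from the Fubini step above. I expect no real obstacle; the only delicate point is recognizing that the correct normalization is $\beta=\rho/u$, so that the constraint measure and the cost weight separate as $\dd\nu$ and $u$.
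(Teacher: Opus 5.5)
Your proof is correct and matches the paper's argument essentially step for step. The paper also changes variables to $u=s+r$, forms the same profile $b(u)=\rho(u)\in[0,u]$, and applies the bathtub principle to the measure $(\sin u/u)\,b(u)\,\dd u=\beta(u)\sin u\,\dd u\le\sin u\,\dd u$ with the increasing cost $u$, filling $[0,\tau]$ from the left. Your one-line proof of the bathtub step and your check that $a\le 1/(8\pi^2)$ supply details the paper leaves implicit.
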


\begin{proof}
Fix \((x,\theta)\). For $(s,r) \in \Delta$  write $q(s,r): =w\left(\exp_x(sv_{x,\theta}),\exp_x(-rv_{x,\theta})\right)$. Let us do a change variables to \(u=s+r\) and \(z=s\).  For \(u \in (0, \pi)\), put
\[
        b(u):=\int_0^u q(z,u-z)\,\dd z\,.
\]
Note that $b(u) \in [0,u]$. Now, equations \eqref{eq:normalized-g-def} and \eqref{eq:normalized-a-def} translate to 
\[
       a(x,\theta)= \frac{1}{ (4\pi)^2}\int_0^\pi \frac{\sin u}{u}b(u)\,\dd u\,,
        \qquad
       g(x,\theta)=\frac{1}{ (4\pi)^2}\int_0^\pi \sin u\,b(u)\,\dd u\,.
\]
Equivalently, let \(\dd\nu(u)=(\sin u/u)b(u)\,\dd u\).  Then \(0\le\dd\nu\le \sin u\,\dd u\), its mass is \((4\pi)^2a(x,\theta)\), and
\[
        g(x,\theta)=\frac{1}{(4\pi)^2}\int_0^\pi u\,\dd\nu(u)\,.
\]
Among all measures dominated by \(\sin u\,\dd u\) with fixed mass, the integral of the increasing cost \(u\) is minimized by filling the interval from the left.  Thus if \(\tau\) is determined by
\[
        (4\pi)^2a(x,\theta)=\int_0^\tau \sin u\,\dd u=1-\cos \tau \,,
\]
then, we have that 
\[
        (4\pi)^2g(x,\theta)
        \ge
        \int_0^\tau  u\sin u\,\dd u
        =\sin \tau -\tau \cos \tau \,.
\]
This is precisely \(g(x,\theta)\ge\varphi(a(x,\theta))\). Moreover, equality occurs when \(\dd\nu=\sin u\,\dd u\) on \([0,\tau ]\) and \(\dd\nu=0\) on \((\tau ,\pi]\), which is equivalent to \(q(s,r)=\II{\{s+r\le \tau \}}\) for almost every \((s,r)\).
\end{proof}

\begin{lemma}\label{lem:normalized-convexity-main}
Let $\varphi$ be defined as in \eqref{eq:normalized-bathtub-profile}.
 Then, the function \(\alpha \mapsto \varphi(\alpha)^{2/3}\) is convex on \([0,1/(8\pi^2)]\).
\end{lemma}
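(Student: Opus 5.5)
The plan is to reparametrize everything by $\tau\in[0,\pi]$, since both $\alpha$ and $\varphi(\alpha)$ are explicit functions of $\tau$. Write $A(\tau):=1-\cos\tau$ and $P(\tau):=\sin\tau-\tau\cos\tau$. Then
\[
\alpha=\frac{A(\tau)}{16\pi^2}, \qquad \varphi(\alpha)=\frac{P(\tau)}{16\pi^2}.
\]
The positive constant factors do not affect convexity. It therefore suffices to show that $P^{2/3}$ is a convex function of $A$ on $[0,2]$.

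Since $A'(\tau)=\sin\tau>0$ on $(0,\pi)$, the map $\tau\mapsto A$ is an increasing bijection $[0,\pi]\to[0,2]$. Differentiating gives $P'(\tau)=\tau\sin\tau$, so
\[
\frac{\mathrm dP}{\mathrm dA}=\tau .
\]
(In particular $\varphi$ itself is convex in $\alpha$, which is the bathtub fact.) For $\tau\in(0,\pi]$ we have $P(\tau)>0$, because $P(0)=0$ and $P'>0$ on $(0,\pi)$. Hence
\[
\frac{\mathrm d}{\mathrm dA}P^{2/3}=\tfrac23\,\tau\,P^{-1/3}.
\]
Convexity in $A$ is equivalent to this derivative being nondecreasing in $A$. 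Because $A$ is increasing in $\tau$, this is the same as $\tau\mapsto \tau P(\tau)^{-1/3}$ being nondecreasing on $(0,\pi]$. Convexity then extends to the closed interval by continuity of $\varphi^{2/3}$ at $\alpha=0$.

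Next I compute
\[
\frac{\mathrm d}{\mathrm d\tau}\bigl(\tau P^{-1/3}\bigr)=P^{-4/3}\Bigl(P-\tfrac13\tau^2\sin\tau\Bigr).
\]
So the claim reduces to the elementary inequality
\[
D(\tau):=3\sin\tau-3\tau\cos\tau-\tau^2\sin\tau\ \ge 0, \qquad \tau\in[0,\pi].
\]
This inequality is the only real content of the lemma, and it is the step I expect to be the (mild) obstacle.

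I would prove it by differentiating once more. We have $D(0)=0$, and
\[
D'(\tau)=3\tau\sin\tau-2\tau\sin\tau-\tau^2\cos\tau=\tau\bigl(\sin\tau-\tau\cos\tau\bigr)=\tau P(\tau)\ge 0 .
\]
The last inequality uses the positivity of $P$ already established. Hence $D\ge0$ on $[0,\pi]$, which completes the argument.
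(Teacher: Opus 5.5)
Your proof is correct and follows the paper's route. Both arguments reparametrize by $\tau$, use $\mathrm{d}\varphi/\mathrm{d}\alpha=\tau$, and reduce convexity to $3(\sin\tau-\tau\cos\tau)\ge\tau^2\sin\tau$, which is the paper's condition $3(1-\tau\cot\tau)\ge\tau^2$ multiplied by $\sin\tau$; you check monotonicity of $h'$ rather than $h''\ge 0$. The only real difference is the last step: you get $D\ge 0$ from $D(0)=0$ and $D'=\tau P\ge 0$, which is more elementary than the paper's use of Euler's partial fraction expansion of $\cot\tau$ and also covers $\tau=\pi$ directly.
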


\begin{proof}
First, observe that by chain rule, we have 
\begin{equation*}
    \frac{\dd \varphi}{\dd \tau}(\tau) = \frac{\tau \sin \tau}{(4\pi)^2} \quad \text{ and } \quad \frac{\dd \alpha}{\dd \tau}(\tau) = \frac{\sin \tau}{(4\pi)^2} \; \;\implies \;\; \frac{\dd \varphi }{\dd \alpha}(\alpha) = \tau(\alpha)\,.
\end{equation*}
Let $h(\alpha) = \varphi(\alpha)^{2/3}$. Then, by chain rule again, we get
\[
       h''(\alpha)
        = \frac{2}{9}\,\varphi(\alpha)^{-4/3}\sqb{3\varphi(\alpha)\varphi ''(\alpha)  - \varphi'(\alpha)^2}\,.
\]
Thus, written in terms of $\tau \in (0,\pi)$, the convexity is equivalent to
\[
        3 (1 - \tau \cot \tau ) - \tau^2 \geq 0\,.
\]
Using Euler's partial fraction expansion for $\tau \in (0, \pi)$,
\[
        \cot \tau=\frac1\tau+2\tau\sum_{k=1}^{\infty}\frac1{\tau^2-k^2\pi^2} \implies 1-\tau\cot \tau
        =2\tau^2\sum_{k=1}^{\infty}\frac1{k^2\pi^2-\tau^2}
        \ge
        2\tau^2\sum_{k=1}^{\infty}\frac1{k^2\pi^2}
        =\frac{\tau^2}{3}\,.
\]
Therefore \(h''(\alpha)\ge0\) in the interior of its domain.  Continuity at endpoints gives full convexity.
\end{proof}

\subsection{Planar Theorem}\label{subsec:main-planar}
The same mechanism also gives the Euclidean analogue of \Cref{thm:continuous-spherical-crossing-main}.  Let \(\Omega\subset{\mathbb R}^2\) be compact and convex, let \(A=\operatorname{area}(\Omega)\), and write \(\lambda=A^{-1}\mathcal L^2|_\Omega\) for the normalized Lebesgue measure, or uniform measure.  For a measurable symmetric \(w:\Omega\times\Omega\to[0,1]\), set
\[
        e_\Omega(w)=\int_{\Omega}\int_{\Omega}w(y,z)\,\lambda(\dd y)\lambda(\dd z)\,,
\]
and let \(\operatorname{Cr}_\Omega(w)\) be the straight-line crossing functional on \(\Omega\), defined analogously to \eqref{eq:continuous-crossing-functional}.

\begin{theorem}\label{thm:continuous-planar-crossing-main}
For every compact convex \(\Omega\subset{\mathbb R}^2\) with nonempty interior and every measurable symmetric \(w:\Omega\times\Omega\to[0,1]\), we have
\begin{equation}\label{eq:continuous-planar-crossing-main}
        \operatorname{Cr}_\Omega(w)
        \ge
        \frac{8}{9\pi^2}\,e_\Omega(w)^3\,.
\end{equation}
\end{theorem}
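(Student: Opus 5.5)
The plan is to run the spherical argument again in the flat setting: flux representation, pointwise bathtub bound, Busemann's inequality (\Cref{lem:busemann-circle-main}), and H\"older. The Euclidean bathtub profile turns out to be a pure power, so the Jensen step becomes an identity.

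\textbf{Step 1: Euclidean fluxes.} Set $A=\operatorname{area}(\Omega)$ and $v_\theta=(\cos\theta,\sin\theta)$. For $x\in\Omega$, $\theta\in[0,2\pi)$ and $s,r>0$, let $q(s,r)=w(x+sv_\theta,x-rv_\theta)\,\II{\{x+sv_\theta,\,x-rv_\theta\in\Omega\}}$. Define
\[
g(x,\theta)=\frac1{A^2}\int_0^\infty\!\!\int_0^\infty (s+r)\,q(s,r)\,\dd r\,\dd s,\qquad a(x,\theta)=\frac1{A^2}\int_0^\infty\!\!\int_0^\infty q(s,r)\,\dd r\,\dd s .
\]

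\textbf{Step 2: Jacobian identities.} I redo the computation of \Cref{lem:spherical-crossing-jacobian-main} with $\cos\to1$ and $\sin u\to u$. The per-segment transversal determinant becomes $\abs{\det\begin{pmatrix}1&s_i\\1&-r_i\end{pmatrix}}=s_i+r_i$, and the base-point factor is again $\abs{\sin(\theta_1-\theta_2)}$.

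Since $\Omega$ is convex, both segments lie in $\Omega$, so the crossing point $x$ lies in $\Omega$. Hence
\[
\operatorname{Cr}_\Omega(w)=\int_\Omega\int_0^{2\pi}\!\!\int_0^{2\pi} g(x,\theta)g(x,\phi)\abs{\sin(\theta-\phi)}\,\dd\theta\,\dd\phi\,\dd x .
\]
This parametrization is a bijection onto ordered endpoint pairs up to null sets, because $(\theta,s,r)$ and $(\theta+\pi,r,s)$ give different ordered pairs.

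Similarly, the fiber of $(y,z)$ is the segment $[yz]$, which has length $s+r$ and cancels the Jacobian $s+r$. This gives
\[
e_\Omega(w)=\int_\Omega\int_0^{2\pi}a(x,\theta)\,\dd\theta\,\dd x .
\]

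\textbf{Step 3: pointwise bathtub.} As in \Cref{lem:spherical-pointwise-inequality}, put $b(u)=\int_0^u q(z,u-z)\,\dd z$. The domain restriction only lowers $b$, so $0\le b(u)\le u$. Then $A^2a=\int_0^\infty b(u)\,\dd u$ and $A^2 g=\int_0^\infty u\,b(u)\,\dd u$. Filling from the left against the density $u\,\dd u$ gives $A^2a=\tau^2/2$ and $A^2g\ge\tau^3/3$. Therefore
\[
g\ge \frac{2^{3/2}}{3}\,A\,a^{3/2},\qquad\text{i.e.}\qquad g^{2/3}\ge 2\cdot3^{-2/3}A^{2/3}\,a .
\]

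\textbf{Step 4: combine.} Apply Busemann's inequality at each $x$, then H\"older on $\Omega$ (which has measure $A$), then the linear bound from Step 3:
\[
\operatorname{Cr}_\Omega(w)\ge\frac1{\pi^2}\int_\Omega\Big(\int g^{2/3}\,\dd\theta\Big)^3\dd x\ge\frac{1}{\pi^2A^2}\Big(\iint g^{2/3}\Big)^3\ge \frac{1}{\pi^2A^2}\cdot\frac{8A^2}{9}\Big(\iint a\Big)^3=\frac{8}{9\pi^2}\,e_\Omega(w)^3 .
\]

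\textbf{Main obstacle.} The only delicate point is Step 2. I need to verify the planar Jacobian carefully, including the claim that degenerate configurations (collinear points, or segments meeting at endpoints) form a null set. I also need the convexity of $\Omega$ to ensure that every crossing point lies in $\Omega$, so that integrating $x$ over $\Omega$ rather than over $\mathbb R^2$ loses nothing. Once the representation holds, the remaining steps are mechanical.
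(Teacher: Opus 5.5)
Your proposal is correct and follows the paper's proof essentially step for step. It uses the same $(s+r)$-weighted fluxes with $w$ extended by zero (your indicator in $q$), the same Jacobian $(s+r)(s'+r')\abs{\sin(\theta-\phi)}$, Busemann plus H\"older on $\Omega$, and the pointwise bathtub bound $g\ge \frac{2\sqrt2}{3}A\,a^{3/2}$, whose $2/3$-power is linear in $a$, so no Jensen step is needed and your constants give exactly $\frac{8}{9\pi^2}e_\Omega(w)^3$.
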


\begin{proof}
Let us extend $w$ to ${\mathbb R}^2 \times {\mathbb R}^2$ by $0$. For \(\theta\in[0,2\pi)\), write \(v_\theta=(\cos\theta,\sin\theta)\) and define
\begin{align*}
        g_\Omega(x,\theta)
        &:=\frac{1}{A^2}\int_{0}^\infty\int_{0}^\infty
        (s+r)w(x+sv_\theta,x-rv_\theta)\,\dd r\dd s\,,\\
        a_\Omega(x,\theta)
        &:=\frac{1}{A^2}\int_{0}^\infty\int_{0}^\infty
        w(x+sv_\theta,x-rv_\theta)\,\dd r\dd s\,.
\end{align*}
By the analogous computation to \Cref{lem:incidence-identity-main} and \Cref{lem:spherical-crossing-jacobian-main}, the Euclidean crossing  Jacobian is given by
\((s+r)(s'+r')\abs{\sin(\theta-\phi)}\), so we can write 
\[
\begin{aligned}
        \operatorname{Cr}_\Omega(w)
        &=\int_\Omega\int_0^{2\pi}\int_0^{2\pi}
        g_\Omega(x,\theta)g_\Omega(x,\phi)\abs{\sin(\theta-\phi)}
        \,\dd\theta \, \dd\phi \, \dd x\,,\\
        e_\Omega(w)
        &=\int_\Omega\int_0^{2\pi}a_\Omega(x,\theta)\,\dd\theta \, \dd x\,.
\end{aligned}
\]
As in the spherical proof, Busemann's inequality and H\"older give
\begin{equation}\label{eq:planar-holder-step-main}
        \operatorname{Cr}_\Omega(w)
        \ge
        \frac1{\pi^2A^2}
        \left(\int_\Omega\int_0^{2\pi}g_\Omega(x,\theta)^{2/3}\,\dd\theta \, \dd x\right)^3\,.
\end{equation}
For fixed \((x,\theta)\), analogously to \Cref{lem:spherical-pointwise-inequality} we can prove that 
\begin{equation*}
    g_\Omega(x,\theta) \geq \frac{2\sqrt{2} }{3} A  \, a_\Omega(x,\theta)^{3/2} \implies  \int_\Omega\int_0^{2\pi}g_\Omega(x,\theta)^{2/3}\,\dd\theta\, \dd x
        \ge
        \left(\frac{2\sqrt2}{3} A\right)^{2/3}e_\Omega(w)\,.
\end{equation*}
Substitution into \eqref{eq:planar-holder-step-main} yields
\[
        \operatorname{Cr}_\Omega(w)
        \ge
        \frac1{\pi^2A^2}
        \left(\frac{2\sqrt2}{3} A\right)^2e_\Omega(w)^3
        =
        \frac8{9\pi^2} e_\Omega(w)^3\,. \qedhere \]
\end{proof}
One can think of \Cref{thm:continuous-planar-crossing-main} as a local version of \Cref{thm:continuous-spherical-crossing-main}. Indeed, after transferring a planar drawing to a sufficiently small local chart on the sphere, the planar lemmas used in the proof of \Cref{thm:continuous-planar-crossing-main} become immediate corollaries of the corresponding lemmas in the spherical setting. Moreover, if the plane can be tiled by copies of $\Omega$, then \Cref{thm:continuous-planar-crossing-main} can be deduced directly from \Cref{thm:continuous-spherical-crossing-main}: one covers the sphere with rescaled copies of the planar drawing, with the scales tending to zero, so that the resulting error tends to zero as well. 

\section{Busemann's Inequality}\label{sec:busemann}

In this section, we prove Busemann’s inequality~\ref{lem:busemann-circle-main}. If one interprets $f^{1/3}$ as the radial function of a star-shaped body in the plane, then the inequality has a geometric meaning. It states that the expected area of the triangle formed by the center and two uniformly randomly sampled points from the body is minimized when the body is an ellipsoid. This statement for convex bodies is originally proved in \cite{Busemann53}. We give an alternative proof tailored to our purposes.  We start by proving a linear version of the inequality, and then we lift it up.

\begin{lemma}\label{lem:busemann-line}
For every measurable function $q:{\mathbb R}\to[0,\infty)$,
\begin{equation}\label{eq:busemann-line}
        \int_{{\mathbb R}}\int_{{\mathbb R}}q(u)q(v)\abs{u-v}\,\dd u\dd v
        \ge
        \frac{2}{\pi^2}
        \left(\int_{{\mathbb R}}q(u)^{2/3}\,\dd u\right)^3\,.
\end{equation}
\end{lemma}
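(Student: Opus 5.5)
The plan is to reduce \eqref{eq:busemann-line} to a one-dimensional Hölder inequality. The reduction has two ingredients: first a layer-cake rewriting of the energy, then a change of variables to the distribution function of $q$.

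\textbf{Step 1: layer-cake form of the energy.} We may assume $M:=\int_{{\mathbb R}} q<\infty$; otherwise the left side of \eqref{eq:busemann-line} is infinite. Set $Q(t):=\int_{-\infty}^t q(u)\,\dd u$. Write $\abs{u-v}=\int_{{\mathbb R}}\II{\{\min(u,v)<t<\max(u,v)\}}\,\dd t$ and apply Fubini. The condition says exactly one of $u,v$ lies below $t$, which gives
\[
        \int_{{\mathbb R}}\int_{{\mathbb R}}q(u)q(v)\abs{u-v}\,\dd u\,\dd v
        = 2\int_{{\mathbb R}} Q(t)\bigl(M-Q(t)\bigr)\,\dd t\,.
\]

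\textbf{Step 2: change variables to $y=Q(t)\in(0,M)$.} The function $Q$ is absolutely continuous and nondecreasing, with $Q'=q$ almost everywhere. On the set $\{q=0\}$ the right side of the target inequality receives no contribution, while the energy only gains. So we discard that set and use the monotone change of variables $\dd y=q(t)\,\dd t$ on $\{q>0\}$; a standard area-formula argument handles this. Write $\rho(y):=q(t(y))$. The two quantities become
\[
        2\int_{{\mathbb R}}Q(M-Q)\,\dd t \;\ge\; 2\int_0^M \frac{y(M-y)}{\rho(y)}\,\dd y\,,
        \qquad
        \int_{{\mathbb R}} q^{2/3}\,\dd t=\int_0^M \rho(y)^{-1/3}\,\dd y\,.
\]

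\textbf{Step 3: Hölder.} Factor the integrand of the second quantity as
\[
        \rho^{-1/3}=\bigl(y(M-y)\rho^{-1}\bigr)^{1/3}\bigl(y(M-y)\bigr)^{-1/3}.
\]
Hölder with exponents $3$ and $3/2$ then gives
\[
        \int_0^M\rho^{-1/3}\,\dd y\le\Bigl(\int_0^M\frac{y(M-y)}{\rho}\,\dd y\Bigr)^{1/3}\Bigl(\int_0^M\bigl(y(M-y)\bigr)^{-1/2}\,\dd y\Bigr)^{2/3}.
\]
The last integral is the beta integral and equals $\pi$, independently of $M$. Cubing yields
\[
        \int_0^M y(M-y)\rho^{-1}\,\dd y\ \ge\ \pi^{-2}\Bigl(\int q^{2/3}\Bigr)^3 .
\]
Combining this with Steps 1 and 2 gives exactly \eqref{eq:busemann-line} with constant $2/\pi^2$.

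As a consistency check, equality in Hölder forces $\rho\propto (y(M-y))^{1/2}$. This corresponds to $q(u)=c\,(1+u^2)^{-3/2}$ up to affine change, which matches the ellipse (circle) case in \Cref{lem:busemann-circle-main} under $u=\tan\theta$.

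\textbf{Main obstacle.} The step I expect to need the most care is the measure-theoretic justification of the change of variables in Step 2. This concerns the regions where $q=0$ or where $q$ is merely measurable, so that $Q$ is not strictly increasing. It also concerns making sure the inequality direction is preserved when discarding those regions. I would handle this by first proving the bound for $q$ continuous and strictly positive, where $Q$ is a $C^1$ diffeomorphism onto $(0,M)$. I would then pass to general $q$ by approximation, for instance $q_\varepsilon=q*\eta_\varepsilon+\varepsilon e^{-u^2}$, using monotone or dominated convergence on both sides. Alternatively, I would apply the area formula directly to the absolutely continuous monotone map $Q$.
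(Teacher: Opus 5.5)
Your proof is correct and is essentially the paper's argument: the layer-cake identity $2\int Q(M-Q)$, the change of variables to the distribution coordinate $y=Q(t)$, and H\"older with exponents $3$ and $3/2$ against the weight $(y(M-y))^{-1/2}$, whose integral is $\pi$. Keeping a general mass $M$ instead of normalizing to $1$, and using the area formula for the absolutely continuous monotone $Q$ instead of the paper's reduction to $q$ positive and continuous on a compact interval, are only cosmetic differences.

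There is one small slip in your consistency check. Equality in H\"older forces $\rho\propto (y(M-y))^{3/2}$, not $(y(M-y))^{1/2}$. It is the $3/2$ power that actually corresponds to $q(u)=c(1+u^2)^{-3/2}$; for this $q$ one has $M=2$ and $\rho=(y(2-y))^{3/2}$.
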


\begin{proof}
By truncating $q$ to $q_n(u):=(q(u) \wedge n)\II{\{\abs{u}\le n\}}$ and using monotone convergence on both sides, it is enough to prove the estimate for bounded $q$ supported on a finite interval.  On such an interval we may approximate in $L^1$ by functions which are positive and continuous on their support; since the kernel $\abs{u-v}$ is bounded on a fixed compact set and since $|a^{2/3}-b^{2/3}|\le |{a-b}|^{2/3}$, the estimate passes to the limit.  Thus, without loss of generality, we assume that $q$ is positive and continuous on its support $[a,b]$, and that it integrates to $1$. Define
\[
        Q(x):=\int_a^x q(u)\,\dd u\,.
\]
Now we note that equality $|u - v| = \int_{{\mathbb R}}\II{\{u<x<v\}}+\II{\{v<x<u\}}\,\dd x$ lets us rewrite the left hand side as
\begin{equation}\label{eq:busemann-line-layercake}
\begin{aligned}
        \int_{{\mathbb R}}\int_{{\mathbb R}}q(u)q(v)\abs{u-v}\,\dd u\dd v
        &=2\int_a^b Q(x)(1-Q(x))\,\dd x\,.
\end{aligned}
\end{equation}
Define $x(t)$ by $Q(x(t))=t$ for $t\in(0,1)$.  Then $x'(t)=q(x(t))^{-1}$, so the equality \eqref{eq:busemann-line-layercake} becomes
\begin{equation}\label{eq:busemann-line-quantile-energy}
        \int_{{\mathbb R}}\int_{{\mathbb R}}q(u)q(v)\abs{u-v}\,\dd u\dd v
        =2 \int_0^1 t(1-t) q(x(t))^{-1}\,\dd t = 2\int_0^1 t(1-t) \, y(t)^3\,\dd t\,,
\end{equation}
where $y(t):=q(x(t))^{-1/3}$. Similarly, by the change of variables $u = x(t)$, we get 
\begin{equation}\label{eq:busemann-line-quantile-mass}
        \int_{{\mathbb R}}q(u)^{2/3}\,\dd u
        =\int_0^1 q(x(t))^{-1/3}\,\dd t = \int_0^1 y(t)\,\dd t\,.
\end{equation}
Now, H\"older's inequality gives
\[
        \Biggl({\int_0^1 y(t)\,\dd t\Biggr)}^3
        \le
        \Biggl(\int_0^1 t(1-t) \, y(t)^3\,\dd t\Biggr)
        \Biggl(\int_0^1 \frac{\dd t}{\sqrt{t(1-t)}}\Biggr)^{2} = \pi^2 \int_0^1 t(1-t) \, y(t)^3\,\dd t\,.
\]
Hence, we are done.
\end{proof}

\begin{proof}[Proof of \Cref{lem:busemann-circle-main}]
Let us define a bilinear form $B$ by
\[
        B(f,h):=\int_0^{2\pi}\int_0^{2\pi}
        f(\theta)h(\phi)\abs{\sin(\theta-\phi)}\,\dd\theta\dd\phi \,.
\]
First replace $f$ by the average $\bar f(\theta) := [f(\theta)+f(\theta+\pi)]/2,$ where angles are taken modulo $2\pi$.  The kernel $\abs{\sin(\theta-\phi)}$ is unchanged when either variable is shifted by $\pi$, so bilinearity gives $B(\bar f,\bar f)=B(f,f)$.  Since $x\mapsto x^{2/3}$ is concave, we have
\[
        \int_0^{2\pi} \bar f(\theta)^{2/3}\,\dd\theta
        \ge
        \int_0^{2\pi} f(\theta)^{2/3}\,\dd\theta\,.
\]
We can therefore assume without loss of generality that $f$ is $\pi$-periodic. Up to a null set of directions, write $u=\tan\theta$ with $\theta\in(-\pi/2,\pi/2)$, and define
\[
        q(u):=\frac{f(\tan^{-1} u)}{\,\,(1+u^2)^{3/2}}\,.
\]
If $u = \tan \theta$ and $v = \tan \phi$, we note that 
\[
        \abs{\sin(\theta-\phi)}
        =\frac{\abs{u-v}}{\sqrt{(1+u^2)(1+v^2)}}
        \quad \text{ and } \quad 
         \frac{\dd\theta}{\dd u}=\frac{1}{1+u^2}\,.
\]
Hence, this change of variables gives 
\begin{equation}\label{eq:busemann-circle-to-line-energy}
        B(f,f)
        =4\int_{-\pi/2}^{\pi/2}\int_{-\pi/2}^{\pi/2}
        f(\theta)f(\phi)\abs{\sin(\theta-\phi)}\,\dd\theta\dd\phi
        =4\int_{{\mathbb R}}\int_{{\mathbb R}}q(u)q(v)\abs{u-v}\,\dd u\dd v\,.
\end{equation}
Also, with the same change of variables,
\begin{equation}\label{eq:busemann-circle-to-line-mass}
        \int_0^{2\pi}f(\theta)^{2/3}\,\dd\theta
        =2\int_{{\mathbb R}}q(u)^{2/3}\,\dd u\,.
\end{equation}
Applying \Cref{lem:busemann-line} to $q$ and using \eqref{eq:busemann-circle-to-line-energy} and \eqref{eq:busemann-circle-to-line-mass}, we obtain
\[
        B(f,f)
        \ge
        4\cdot\frac{2}{\pi^2}
        \left(\int_{{\mathbb R}}q(u)^{2/3}\,\dd u\right)^3
        =
        \frac1{\pi^2}
        \left(\int_0^{2\pi}f(\theta)^{2/3}\,\dd\theta\right)^3\,.
\]
This is exactly \eqref{eq:busemann-circle-main}.
\end{proof}

\begin{remark}
The  H\"older equality case in the proof of \Cref{lem:busemann-line} holds when  $y(t)=q(x(t))^{-1/3}$ is proportional to $1/\sqrt{t(1-t)}$.  In the proof of \Cref{lem:busemann-circle-main}, this gives densities coming from centered ellipses.
\end{remark}

\section{Reduction}\label{sec:reduction}
In this section, we reduce the discrete theorems to the continuous ones. Since the arguments for the spherical and planar cases are very similar, we only carry out the reduction of \Cref{thm:main-spherical} to \Cref{thm:continuous-spherical-crossing-main}.  The idea is to replace each vertex by a small smooth cloud of radius \(\delta\), and replace each edge by the product of its two endpoint clouds.  For fixed \(\delta\), the smoothed vertex density is uniformly close to \(1\) once \(n\) gets large.  The main point is to show that this smoothing does not create many new crossings. 

We first prove a lemma showing that if a perturbation of a pair of non-crossing edges causes them to cross, then at least one endpoint must be close to the other edge. For illustration, see \Cref{fig:perturbation}.

\begin{lemma} \label{lem:reduction-new-crossing-geometric}
There are absolute positive constants \(C\) and \(c\) such that the following property holds.  Suppose \(0<\alpha<1/10\), let \(0<\delta\le c\alpha^2\), and let $x_1,x_2,x_3,x_4 \in {\mathbb S}^2$ be four points satisfying
\[
        \alpha\le d(x_1,x_2),d(x_3,x_4)\le \pi-\alpha\,.
\]
Assume that \([x_1x_2]\) and \([x_3x_4]\) do not cross.  Suppose \(y_1,y_2,y_3,y_4\in{\mathbb S}^2\) are such that $d(x_i, y_i) \leq \delta$ and the perturbed segments \([y_1y_2]\) and \([y_3y_4]\) cross, then
\begin{equation}\label{eq:reduction-new-crossing-nearby-endpoint}
        \min\{d(x_1,[x_3x_4]),d(x_2,[x_3x_4]),
              d(x_3,[x_1x_2]),d(x_4,[x_1x_2])\}
        \le C{\delta}/{\alpha}\,.
\end{equation}
\end{lemma}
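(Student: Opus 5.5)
The plan is a continuity (homotopy) argument: move the endpoints continuously from the $x_i$ to the $y_i$ and look at the first moment the two segments meet. For $s\in[0,1]$ let $z_i(s)$ be the point at fraction $s$ along the shortest geodesic from $x_i$ to $y_i$. Then $d(x_i,z_i(s))\le\delta$ for all $s$. By the triangle inequality,
\[
\alpha-2\delta\le d(z_1(s),z_2(s))\le \pi-\alpha+2\delta ,
\]
and similarly for $z_3(s),z_4(s)$. Since $\delta\le c\alpha^2\le\alpha/4$ for small $c$, both moving segments have lengths in $[\alpha/2,\pi-\alpha/2]$ throughout. In particular they are never degenerate or antipodal, so $[z_1(s)z_2(s)]$ and $[z_3(s)z_4(s)]$ are well defined and depend continuously on $s$ in the Hausdorff metric.

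\emph{Step 1 (a geodesic stability estimate).} If $d(p,p'),d(q,q')\le\delta$ and $d(p,q)\in[\alpha/2,\pi-\alpha/2]$, then the Hausdorff distance between $[pq]$ and $[p'q']$ is at most $C_0\delta/\alpha$. I would prove this in $\mathbb R^3$. The unit normal of the great circle through $p,q$ is $p\times q/|p\times q|$, and $|p\times q|=\sin d(p,q)\gtrsim\alpha$. Hence this normal moves by $O(\delta/\alpha)$ when the endpoints move by $\delta$. Points of $[pq]$ are $(\sin((1-\lambda)\ell)\,p+\sin(\lambda\ell)\,q)/\sin\ell$ with $\ell=d(p,q)$, and the same bound on $1/\sin\ell$ controls their displacement by $O(\delta/\alpha)$. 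Here $\delta\le c\alpha^2$ guarantees $\delta/\alpha\le c\alpha$ is small, so the linearization is legitimate and the perturbed segment does not wrap the wrong way around the circle.

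\emph{Step 2 (topological claim).} Let
\[
s^*=\inf\{s:\ [z_1(s)z_2(s)]\cap[z_3(s)z_4(s)]\neq\emptyset\}.
\]
This infimum exists because at $s=1$ the segments cross. The set of intersecting configurations is closed, so at $s^*$ the two segments intersect. I claim that at $s^*$ some endpoint $z_i(s^*)$ lies on the other segment. Suppose instead that the segments meet only at points interior to both.
\begin{itemize}
\item If they lie on distinct great circles, the intersection is transversal. By the implicit function theorem it persists under small perturbations, which contradicts $s^*$ being an infimum, unless $s^*=0$. But at $s=0$ the original segments do not cross, and by the genericity convention they are also disjoint.
\item If they lie on the same great circle and overlap, then an endpoint of one segment already lies on the other.
\end{itemize}
This step is the main obstacle: the degenerate cases (collinear overlap, touching at $s=0$) need care. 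I would first try to phrase the whole argument via this "first intersection time" formulation, so that only closedness of the intersecting set and openness of transversal interior crossings are needed.

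\emph{Step 3 (conclusion).} Say $z_1(s^*)\in[z_3(s^*)z_4(s^*)]$. By Step 1 applied to the pair $(x_3,x_4)\to(z_3(s^*),z_4(s^*))$,
\[
d(x_1,[x_3x_4])\le d(x_1,z_1(s^*))+d_H\bigl([z_3(s^*)z_4(s^*)],[x_3x_4]\bigr)\le \delta+C_0\delta/\alpha\le C\delta/\alpha ,
\]
using $\alpha<1$. The other three cases are symmetric, which gives \eqref{eq:reduction-new-crossing-nearby-endpoint}.
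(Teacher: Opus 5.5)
Your proposal is correct and follows essentially the same route as the paper. Both arguments first establish the $O(\delta/\alpha)$ Hausdorff stability estimate for perturbed segments, which you obtain from the slerp parametrization where the paper rotates the supporting great circle; both then use the first-meeting-time argument to place an endpoint of one moving segment on the other, and finish with the triangle inequality. Your explicit handling of the collinear-overlap and $s^*=0$ cases is a minor refinement of what the paper leaves implicit. Note that at $s^*=0$ no genericity convention is needed, since a non-crossing intersection already puts an endpoint on the other segment.
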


\begin{figure}[t]
    \centering
\begin{tikzpicture}[
    x=1cm,y=1cm,
    line cap=round,
    line join=round,
    font=\scriptsize,
    spheregrid/.style={draw=black!24, line width=0.45pt},
    xsupport/.style={draw=rmccOriginalBlue!45!black, line width=0.65pt},
    ysupport/.style={draw=rmccPerturbRed!70!black, line width=0.62pt},
    origedge/.style={draw=rmccOriginalBlue, line width=2.35pt,
        preaction={draw=white, line width=4.6pt}},
    pertedge/.style={draw=rmccPerturbRed, line width=1.80pt,
        preaction={draw=white, line width=3.5pt}},
    moveline/.style={draw=black!55, densely dotted, line width=0.60pt, -stealth},
    xpoint/.style={circle, fill=rmccOriginalBlue, draw=white, line width=0.45pt, inner sep=1.85pt},
    ypoint/.style={circle, fill=white, draw=rmccPerturbRed, line width=0.95pt, inner sep=1.75pt},
    labelbox/.style={fill=white, fill opacity=0.88, text opacity=1, inner sep=1.1pt, rounded corners=0.7pt}
]

\definecolor{rmccOriginalBlue}{RGB}{22,70,145}
\definecolor{rmccPerturbRed}{RGB}{190,42,36}

\path[use as bounding box] (-3.22,-3.08) rectangle (3.22,3.08);
\def\R{3.00}

\pgfmathsetmacro{\Ga}{3.00}
\pgfmathsetmacro{\Gb}{0.72}
\pgfmathsetmacro{\Grot}{8}
\pgfmathsetmacro{\Ha}{3.00}
\pgfmathsetmacro{\Hb}{1.372}
\pgfmathsetmacro{\Hrot}{-49.21}

\pgfmathsetmacro{\Ua}{3.00}
\pgfmathsetmacro{\Ub}{1.10}
\pgfmathsetmacro{\Urot}{10}
\pgfmathsetmacro{\Va}{3.00}
\pgfmathsetmacro{\Vb}{1.10}
\pgfmathsetmacro{\Vrot}{-55}

\coordinate (x1) at ({\Ga*cos(118)*cos(\Grot)-\Gb*sin(118)*sin(\Grot)},
                     {\Ga*cos(118)*sin(\Grot)+\Gb*sin(118)*cos(\Grot)});
\coordinate (x2) at ({\Ga*cos(72)*cos(\Grot)-\Gb*sin(72)*sin(\Grot)},
                     {\Ga*cos(72)*sin(\Grot)+\Gb*sin(72)*cos(\Grot)});
\coordinate (x3) at ({\Ha*cos(106.54)*cos(\Hrot)-\Hb*sin(106.54)*sin(\Hrot)},
                     {\Ha*cos(106.54)*sin(\Hrot)+\Hb*sin(106.54)*cos(\Hrot)});
\coordinate (x4) at ({\Ha*cos(69.87)*cos(\Hrot)-\Hb*sin(69.87)*sin(\Hrot)},
                     {\Ha*cos(69.87)*sin(\Hrot)+\Hb*sin(69.87)*cos(\Hrot)});

\coordinate (y1) at ({\Ua*cos(118)*cos(\Urot)-\Ub*sin(118)*sin(\Urot)},
                     {\Ua*cos(118)*sin(\Urot)+\Ub*sin(118)*cos(\Urot)});
\coordinate (y2) at ({\Ua*cos(68)*cos(\Urot)-\Ub*sin(68)*sin(\Urot)},
                     {\Ua*cos(68)*sin(\Urot)+\Ub*sin(68)*cos(\Urot)});
\coordinate (y3) at ({\Va*cos(112)*cos(\Vrot)-\Vb*sin(112)*sin(\Vrot)},
                     {\Va*cos(112)*sin(\Vrot)+\Vb*sin(112)*cos(\Vrot)});
\coordinate (y4) at ({\Va*cos(78)*cos(\Vrot)-\Vb*sin(78)*sin(\Vrot)},
                     {\Va*cos(78)*sin(\Vrot)+\Vb*sin(78)*cos(\Vrot)});
\coordinate (q)  at ({\Ua*cos(76.85)*cos(\Urot)-\Ub*sin(76.85)*sin(\Urot)},
                     {\Ua*cos(76.85)*sin(\Urot)+\Ub*sin(76.85)*cos(\Urot)});

\fill[black!1.5] (0,0) circle (\R);
\begin{scope}
    \clip (0,0) circle (\R);

    \draw[xsupport] plot[variable=\t, domain=0:180, samples=100]
        ({\Ga*cos(\t)*cos(\Grot)-\Gb*sin(\t)*sin(\Grot)},
         {\Ga*cos(\t)*sin(\Grot)+\Gb*sin(\t)*cos(\Grot)});
    \draw[xsupport] plot[variable=\t, domain=0:180, samples=100]
        ({\Ha*cos(\t)*cos(\Hrot)-\Hb*sin(\t)*sin(\Hrot)},
         {\Ha*cos(\t)*sin(\Hrot)+\Hb*sin(\t)*cos(\Hrot)});

    \draw[ysupport] plot[variable=\t, domain=0:180, samples=100]
        ({\Ua*cos(\t)*cos(\Urot)-\Ub*sin(\t)*sin(\Urot)},
         {\Ua*cos(\t)*sin(\Urot)+\Ub*sin(\t)*cos(\Urot)});
    \draw[ysupport] plot[variable=\t, domain=0:180, samples=100]
        ({\Va*cos(\t)*cos(\Vrot)-\Vb*sin(\t)*sin(\Vrot)},
         {\Va*cos(\t)*sin(\Vrot)+\Vb*sin(\t)*cos(\Vrot)});
\end{scope}
\draw[black!55, line width=0.75pt] (0,0) circle (\R);

\draw[origedge] plot[variable=\t, domain=118:72, samples=90]
    ({\Ga*cos(\t)*cos(\Grot)-\Gb*sin(\t)*sin(\Grot)},
     {\Ga*cos(\t)*sin(\Grot)+\Gb*sin(\t)*cos(\Grot)});
\draw[origedge] plot[variable=\t, domain=106.54:69.87, samples=80]
    ({\Ha*cos(\t)*cos(\Hrot)-\Hb*sin(\t)*sin(\Hrot)},
     {\Ha*cos(\t)*sin(\Hrot)+\Hb*sin(\t)*cos(\Hrot)});

\draw[moveline] (x1) -- (y1);
\draw[moveline] (x2) -- (y2);
\draw[moveline] (x3) -- (y3);
\draw[moveline] (x4) -- (y4);

\draw[pertedge] plot[variable=\t, domain=118:68, samples=90]
    ({\Ua*cos(\t)*cos(\Urot)-\Ub*sin(\t)*sin(\Urot)},
     {\Ua*cos(\t)*sin(\Urot)+\Ub*sin(\t)*cos(\Urot)});
\draw[pertedge] plot[variable=\t, domain=112:78, samples=70]
    ({\Va*cos(\t)*cos(\Vrot)-\Vb*sin(\t)*sin(\Vrot)},
     {\Va*cos(\t)*sin(\Vrot)+\Vb*sin(\t)*cos(\Vrot)});

\node[xpoint] at (x1) {};
\node[xpoint] at (x2) {};
\node[xpoint] at (x3) {};
\node[xpoint] at (x4) {};
\node[ypoint] at (y1) {};
\node[ypoint] at (y2) {};
\node[ypoint] at (y3) {};
\node[ypoint] at (y4) {};
\fill[rmccPerturbRed] (q) circle (1.05pt);

\node[labelbox, text=rmccOriginalBlue, anchor=south] at (-1.50, 0) {$x_1$};
\node[labelbox, text=rmccOriginalBlue, anchor=north east] at ( 0.79, 0.70) {$x_2$};
\node[labelbox, text=rmccOriginalBlue, anchor=south west] at ( 0.45, 1.58) {$x_3$};
\node[labelbox, text=rmccOriginalBlue, anchor=west] at ( 1.68, 0.3) {$x_4$};

\node[labelbox, text=rmccPerturbRed, anchor=south] at (-1.56, 0.80) {$y_1$};
\node[labelbox, text=rmccPerturbRed, anchor=south west] at ( 0.94, 1.28) {$y_2$};
\node[labelbox, text=rmccPerturbRed, anchor=east] at ( 0.1, 1.4) {$y_3$};
\node[labelbox, text=rmccPerturbRed, anchor=south west] at ( 0.75, -0.2) {$y_4$};

\end{tikzpicture}
    \caption{Curves $[x_1x_2]$ and $[x_3x_4]$ are non-crossing, but they begin to cross after a small perturbation. Observe that the endpoint $x_2$ is close to the edge $[x_3x_4]$.}
    \label{fig:perturbation}
\end{figure}

\begin{proof}
In the following argument, the absolute constants $c$ and $C$ may vary from line to line. We denote the Hausdorff distance by $d_H$. Recall that the Hausdorff distance between two closed sets is the greatest distance from a point in one set to the nearest point in the other.

We first record the only geometric estimate we need. If \(d(x,x')\in[\alpha,\pi-\alpha]\), and  $y,y' \in {\mathbb S}^2$ satisfy \(d(x,y),d(x',y')\le\delta\) then
\begin{equation}\label{eq:reduction-segment-hausdorff}
        d_H([xx'],[yy'])\le C{\delta}/{\alpha}\,.
\end{equation}
To see this, compare the supporting great circles.  If
\(n=(x\times x')/\abs{x\times x'}\) and \(n'=(y\times y')/\abs{y\times y'}\) are unit normals, then \(\abs{x\times x'}=\sin d(x,x')\ge\sin\alpha\geq c\alpha\). On the other hand, by linearity, the cross product changes by at most \(O(\delta)\).  Thus, after choosing the sign of \(n'\),
\[
        \abs{n-n'}\le C{\delta}/{\alpha}\,.
\]
So the supporting great circles \(\Gamma(xx')\) and \(\Gamma(yy')\) are within Hausdorff distance \(O(\delta/\alpha)\).  Rotating \(\Gamma(xx')\) onto \(\Gamma(yy')\) moves every point by \(O(\delta/\alpha)\), and sends the endpoints \(x,x'\) to points within \(O(\delta/\alpha)\) of \(y,y'\).  On a fixed circle, shorter arcs whose endpoints are within \(O(\delta/\alpha)\) have Hausdorff distance \(O(\delta/\alpha)\), since their lengths remain at most \(\pi-\alpha/2\).  This proves \eqref{eq:reduction-segment-hausdorff}.

Now, join each \(x_i\) to \(y_i\) by the short geodesic path and move the four endpoints continuously.  Since both original edge lengths lie in \([\alpha,\pi-\alpha]\), all intermediate edge lengths remain bounded away from \(0\) and \(\pi\), if \(c\) is chosen sufficiently small.  At time \(0\) the two closed segments are disjoint, while at time \(1\) they cross.  Let \(t\) be the first time at which the two closed moving segments meet.  The first meeting cannot be a transverse interior-interior crossing, since such a crossing would persist for earlier time.  Hence, at time \(t\), an endpoint of one moving segment lies on the other moving segment. Suppose at time $t$, edges are $[z_1z_2]$ and $[z_3z_4]$. Without loss of generality, suppose $z_1 \in [z_3z_4]$. Then,
\[
        d(x_1,[x_3x_4])
        \le d(x_1,z_1)
             +d_H([z_3z_4],[x_3x_4])
        \le \delta+C{\delta}/{\alpha}
        \le C{\delta}/{\alpha}\,.
\]
This concludes the proof.
\end{proof}

Next, we bound the number of edge–vertex pairs for which the vertex lies close to the edge. This bound will later be used to show that only a small number of new crossings are created. For \(A\subset {\mathbb S}^2\), write
\[
        N_r(A):=\{x\in{\mathbb S}^2:d(x,A)<r\}\,.
\]
\begin{lemma}\label{lem:reduction-uniform-counts}
Let \(P_n=\{p_n(v):v\in V(G_n)\}\), and assume \(n^{-1}\sum_{p\in P_n}\delta_p\Rightarrow\mu\).  Then, for any fixed radius \(r\in(0,\pi/2)\),
\begin{equation}\label{eq:reduction-uniform-cap-count}
        \sup_{z\in{\mathbb S}^2}
        \frac1n\#\{p\in P_n: p \in N_r(z)\}
        \le
        \frac{1-\cos r}{2}+o_r(1)\,,
\end{equation}
\begin{equation}\label{eq:reduction-uniform-tube-count}
        \sup_{\Gamma}
        \frac1n\#\{p\in P_n:p\in N_r(\Gamma)\}
        \le
        \sin r+o_r(1)\,,
\end{equation}
where the second supremum is over all great circles \(\Gamma\subset{\mathbb S}^2\).  Here \(o_r(1)\to0\) as \(n\to\infty\) with \(r\) fixed.
\end{lemma}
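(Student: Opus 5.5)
The plan is to deduce both bounds from weak convergence via the portmanteau theorem, made uniform over the compact families of caps and great circles. The pointwise statement is the easy part. Fix a center $z$, and apply it to a slightly larger closed set: for $r'>r$, the closed cap $\bar N_{r'}(z)$ satisfies
\[
\limsup_n \nu_n(\bar N_{r'}(z))\le \mu(\bar N_{r'}(z))=\frac{1-\cos r'}{2}.
\]
The same holds for the closed tube $\bar N_{r'}(\Gamma)=\{x:\abs{\langle x,n\rangle}\le \sin r'\}$, where $n$ is the unit normal of $\Gamma$. Its normalized area is computed by Archimedes' hat-box theorem: the height band $[-\sin r',\sin r']$ has area $2\pi\cdot 2\sin r'$, so its $\mu$-measure is $\sin r'$.

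The real content is uniformity of the supremum. I would handle this with compactness and a finite net. The centers $z$ range over $\mathbb S^2$, and the great circles are parametrized by their normals $n\in\mathbb S^2$; both parameter spaces are compact. Fix $\epsilon>0$ and choose $\eta>0$ small. Take a finite $\eta$-net $\{z_1,\dots,z_K\}$ of $\mathbb S^2$.

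For any $z$ there is some $z_j$ with $d(z,z_j)<\eta$, and then $N_r(z)\subset \bar N_{r+\eta}(z_j)$. Similarly, for normals $n,n_j$ with $\abs{n-n_j}<\eta$,
\[
\abs{\langle x,n\rangle}<\sin r \;\Longrightarrow\; \abs{\langle x,n_j\rangle}<\sin r+\eta,
\]
so $N_r(\Gamma)$ is contained in the closed band $\{\abs{\langle x,n_j\rangle}\le \sin r+\eta\}$. Since $r<\pi/2$, we may take $\eta$ small enough that $\sin r+\eta<1$, so this band is a genuine tube.

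Applying portmanteau to these finitely many closed sets gives, for all $n$ sufficiently large (depending on $\eta$, $r$, $\epsilon$),
\[
\sup_z \nu_n(N_r(z))\le \max_j \nu_n(\bar N_{r+\eta}(z_j))\le \frac{1-\cos(r+\eta)}2+\epsilon,
\]
and likewise the tube supremum is at most $(\sin r+\eta)+\epsilon$. Letting $\eta,\epsilon\to0$ and using continuity of $\cos$ yields both \eqref{eq:reduction-uniform-cap-count} and \eqref{eq:reduction-uniform-tube-count}.

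The main obstacle is only this passage from pointwise to uniform control, and the net argument disposes of it. An alternative is to replace the indicators by Lipschitz majorants: a function equal to $1$ on $N_r(z)$ and vanishing outside $N_{r+\eta}(z)$, and the same for tubes. The family of such majorants is equicontinuous and compact in $C(\mathbb S^2)$, and weak convergence is uniform over compact families of test functions, which gives the same conclusion.
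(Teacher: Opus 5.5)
Your proof is correct and follows essentially the paper's route: compactness of the parameter space (centers, or normals of great circles) combined with the portmanteau upper bound on slightly enlarged closed caps and bands, then shrinking the enlargement. The only difference is that the paper phrases the compactness step sequentially, arguing by contradiction with a convergent subsequence $\Gamma_n\to\Gamma$, where you use a finite $\eta$-net; the two are equivalent.
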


\begin{proof}
We prove the tube estimate; the proof for caps is identical.  Suppose the estimate fails for some fixed \(r\).  Then there is a subsequence, still denoted by \(n\), and great circles \(\Gamma_n\) such that
\[
        \frac1n\#(P_n\cap N_r(\Gamma_n))
        \ge \sin r+\varepsilon
\]
for some \(\varepsilon>0\).  The space of great circles is compact, so after passing to a further subsequence we may assume \(\Gamma_n\to\Gamma\).  For every \(\gamma>0\), and all large \(n\), we have $ N_r(\Gamma_n)\subset \overline{N_{r+\gamma}(\Gamma)}$. By weak convergence,
\[
        \limsup_{n\to\infty}\frac1n\#(P_n\cap N_r(\Gamma_n))
        \le
        \mu\bigl(\overline{N_{r+\gamma}(\Gamma)}\bigr)
        =\sin(r+\gamma)\,.
\]
Letting \(\gamma\downarrow0\) gives a contradiction.  The cap estimate follows from  $\mu(N_r(x)) = (1-\cos r)/2$ and analogous argument.
\end{proof}

We can finally carry out the precise reduction, using the ingredients above.
\begin{proposition}[Reduction]\label{prop:discrete-continuous-reduction}
Let \({\mathcal D}_n=(G_n,p_n)\) be a good uniform sequence of spherical geodesic drawings. There are positive constants $c$ and $C$, such that for every fixed \(\delta \in (0,c)\), there are smooth functions $\rho_{n,\delta}:{\mathbb S}^2\to[0,\infty)$ and $ w_{n,\delta}:{\mathbb S}^2\times{\mathbb S}^2\to[0,\infty)$ with the following properties.
\begin{itemize}
    \item[(i)]  $\rho_{n, \delta}$ is a probability density on ${\mathbb S}^2$ that uniformly converges to the uniform density $1$.  
    \begin{equation}\label{eq:reduction-rho-close}
         \int_{{\mathbb S}^2}\rho_{n,\delta}(x)\,\mu(\dd x)=1
        \quad \text{ and } \quad 
        \norm{\rho_{n,\delta}-1}_{L^\infty(\mu)}\to0\,.
    \end{equation}
    \item[(ii)] The edge density $w_{n,\delta}$ is symmetric, $w_{n,\delta}(x,y)\leq \rho_{n,\delta}(x)\rho_{n,\delta}(y)$, and 
    \begin{equation}
        e(w_{n,\delta}) = 2|E(G_n)|/n^2\,.
    \end{equation}
    \item[(iii)] For fixed $\delta$ and growing $n\to \infty$, the crossing density is bounded: 
    \begin{equation}\label{eq:reduction-crossing-bound}
    \operatorname{Cr}(w_{n,\delta}) \leq 
        8\,\frac{\operatorname{cr}({\mathcal D}_n)}{n^4}
        +C\sqrt{\delta}+o_\delta(1)\,.
\end{equation}
\end{itemize}
\end{proposition}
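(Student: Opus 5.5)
Fix a smooth bump $\psi_\delta\ge 0$ on $[0,\delta)$. For each $p\in{\mathbb S}^2$ let $k_{\delta,p}(x):=\psi_\delta(d(x,p))/Z_\delta$, where $Z_\delta$ is chosen so that $\int k_{\delta,p}\,\dd\mu=1$; this normalization does not depend on $p$ by rotation invariance. Define
\[
\rho_{n,\delta}(x):=\frac1n\sum_{v}k_{\delta,p_n(v)}(x),\qquad
w_{n,\delta}(x,y):=\frac1{n^2}\sum_{uv\in E(G_n)}\bigl(k_{\delta,p_n(u)}(x)k_{\delta,p_n(v)}(y)+k_{\delta,p_n(v)}(x)k_{\delta,p_n(u)}(y)\bigr).
\]
Property (ii) is then immediate. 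Symmetry is built in, and integrating gives $e(w_{n,\delta})=2|E(G_n)|/n^2$. The domination $w\le\rho\otimes\rho$ holds because each edge term is a sub-sum of the full double sum $\frac1{n^2}\sum_{u,v}k_u(x)k_v(y)$.

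For (i), $\int\rho\,\dd\mu=1$ by construction. For each fixed $x$, $\rho_{n,\delta}(x)=\int k_{\delta,p}(x)\,\nu_n(\dd p)$ is the integral of a continuous function of $p$, so it tends to $\int k_{\delta,p}(x)\mu(\dd p)=1$ by weak convergence. The family $\{p\mapsto k_{\delta,p}(x)\}_x$ is equicontinuous, because $\delta$ is fixed and $k$ is smooth. Hence the convergence is uniform in $x$, by a standard compactness/Arzelà–Ascoli argument.

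The heart of the statement is (iii). Expanding $\operatorname{Cr}(w_{n,\delta})$ gives
\[
\frac{1}{n^4}\sum_{e,f}\;\mathbb E\bigl[\mathbf 1\{[y_1y_2]\text{ crosses }[y_3y_4]\}\bigr],
\]
a sum over ordered pairs of edges with both orientations, where the $y_i$ are independent samples from the clouds around the four endpoints. Each unordered crossing pair of the drawing contributes at most $8$ ordered and oriented terms, each bounded by $1$. This yields the main term $8\operatorname{cr}({\mathcal D}_n)/n^4$, together with a term for non-crossing edge pairs whose smoothed versions may cross. Choose $\alpha:=\sqrt{\delta}$ and split the remaining pairs into two groups.

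\emph{(a) Bad lengths.} Here one of the edges has length $<\alpha$ or $>\pi-\alpha$; the latter case also covers near-antipodal edges. I will bound the contribution by the expected number of short smoothed edges crossing anything. Given a short edge $uv$, with $x_1=p(u)$ and $x_2$ within $\alpha+2\delta$ of $x_1$, the set of $(x_3,x_4)$ pairs whose smoothed edge meets it requires some point of $[y_3y_4]$ within $O(\alpha)$ of $x_1$. On average this is an $O(\alpha)$ fraction of pairs, since the tube around a great circle has measure $\sim\sin r$, using \eqref{eq:reduction-uniform-tube-count} together with a union bound over directions. More simply, the total is at most $\frac{2}{n^4}\cdot\#\{\text{short edges}\}\cdot n^2 \cdot O(\alpha)$. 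The near-antipodal case is handled symmetrically: when $x_2$ is close to $-x_1$, replace $x_1$ by the antipode of $x_2$. Alternatively, and more crudely, since any edge pair crosses with probability at most $1$, it suffices to show that the weighted proportion of such configurations is $O(\alpha)$. I will do this by noting that $\frac{1}{n^2}\#\{(u,v):d(p(u),p(v))<2\alpha\}\le \frac{1-\cos 2\alpha}{2}+o(1)=O(\alpha^2)$, by the cap estimate \eqref{eq:reduction-uniform-cap-count}. That is even better, giving $O(\delta)$ for short edges (and likewise for near-antipodal ones, via caps around $-p(u)$).

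\emph{(b) Good lengths.} Here both edge lengths lie in $[\alpha,\pi-\alpha]$ and $\delta\le c\alpha^2$ holds for $\delta$ small. By \Cref{lem:reduction-new-crossing-geometric}, a newly created crossing forces some endpoint, say $x_3$, to lie within $C\delta/\alpha=C\sqrt\delta$ of the segment $[x_1x_2]$, and this segment lies in a great circle. For a fixed edge $(x_1,x_2)$, the tube estimate \eqref{eq:reduction-uniform-tube-count} shows that at most $(\sin(C\sqrt\delta)+o(1))n$ vertices qualify as $x_3$, and the other endpoint is free among at most $n$ vertices. With at most $n^2$ choices of the first edge and $4$ choices of which endpoint is close, the contribution is at most $O(\sqrt\delta)+o_\delta(1)$.

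I expect the main obstacle to be bookkeeping rather than geometry. One point is that the lemma is applied only to the centers $x_i$, while the crossing concerns the $y_i$; this is fine, since the lemma handles exactly that situation. The other is making sure the near-antipodal and short-edge cases, where \eqref{eq:reduction-segment-hausdorff} degenerates, are fully absorbed by the cap counts. I also need to check that degenerate coincidences ($y$'s producing antipodal pairs or shared vertices) form null sets. Edge pairs sharing a vertex are handled in the same way: there are only $O(n^3)$ of them, which is $o(n^4)$.
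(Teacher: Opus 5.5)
Your proposal is correct and follows essentially the same route as the paper. It uses the same kernel smoothing, the same compactness argument for (i), and the same split of originally non-crossing pairs into pairs containing an unstable edge (handled by the cap count) and stable pairs (handled by \Cref{lem:reduction-new-crossing-geometric} plus the tube count).

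The only slip is the choice $\alpha=\sqrt\delta$. With it, the hypothesis $\delta\le c\alpha^2$ of the lemma reads $1\le c$, and this does not follow from $\delta$ being small. Take $\alpha=A\sqrt\delta$ with $A\ge c^{-1/2}$, as the paper does; this changes only the constants.
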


\begin{proof}
Fix $\delta \in (0,c)$.  Choose a smooth nonnegative radial kernel $K:{\mathbb S}^2\times{\mathbb S}^2\to[0,\infty)$ such that \(K(x,y)\) depends only on \(d(x,y)\), is supported on \(d(x,y)<\delta\), and is normalized so that
\[
        \int_{{\mathbb S}^2}K(x,y)\,\mu(\dd x)=1
        \qquad\text{for every }y\in{\mathbb S}^2\,.
\]
For each vertex \(v\in V(G_n)\), let us identify $v$ with $p_n(v) \in {\mathbb S}^2$. Write  \(K_v(x)=K(x,v)\) and define
\begin{equation}\label{eq:reduction-rho-definition-simple}
        \rho_{n,\delta}(x)=\frac1n\sum_{v\in V(G_n)}K_v(x)=\int_{{\mathbb S}^2}K(x,y)\,\nu_n(\dd y),
        \quad \text{ where }
        \nu_n=\frac1n\sum_{v\in V(G_n)}\delta_{v}\,.
\end{equation}
This shows that \(\rho_{n,\delta}\) is a probability density.  Since \(\nu_n\Rightarrow\mu\) and the family
\(\{K(x,\cdot):x\in{\mathbb S}^2\}\) is compact in \(C({\mathbb S}^2)\), the convergence of these integrals is uniform in \(x \in {\mathbb S}^2\).  Hence
\[
        \rho_{n,\delta}(x)\to
        \int_{{\mathbb S}^2}K(x,y)\,\mu(\dd y)=1
        \qquad\text{uniformly in }x\,,
\]
which proves \eqref{eq:reduction-rho-close}. Let \(\vec E(G_n)\) denote the set of the two orientations of each edge of \(G_n\), and define
\begin{equation}\label{eq:reduction-w-definition-simple}
         w_{n,\delta}(x,y)
        =
        \frac1{n^2}\sum_{(u,v)\in\vec E(G_n)}K_u(x)K_v(y)\,.
\end{equation}
This function is symmetric.  Moreover, from here we see $  w_{n,\delta}(x,y) \leq \rho_{n,\delta}(x)\rho_{n,\delta}(y)$. Also, integrating above, we obtain $e(w_{n,\delta}) = 2|E(G_n)|/n^2$. It now remains to compare crossings. Expanding \(w_{n, \delta }\), we observe that
\begin{equation}\label{eq:reduction-crossing-expanded-simple}
            \operatorname{Cr}(w_{n,\delta})
        =\frac1{n^4}
        \sum_{\substack{(a,b)\in\vec E(G_n) \\ (u,v)\in\vec E(G_n)}}
        \Pr\bigl([U_uU_v]\text{ crosses }[U_aU_b]\bigr)\,,
\end{equation}
where, for each occurrence of a vertex \(v\), the random point \(U_v\) is sampled independently from the density \(K_v\,\mu\).  Thus every sampled endpoint lies within spherical distance \(\delta\) of the original endpoint. We decompose the ordered pairs in \eqref{eq:reduction-crossing-expanded-simple}.  Ordered pairs of oriented edges sharing a vertex clearly contribute only \(O(1/n)\).   If two independent edges cross in the original drawing, their unordered pair contributes at most eight ordered and oriented terms to \eqref{eq:reduction-crossing-expanded-simple}.  Hence all originally crossing independent pairs contribute at most
\[
        8\,\frac{\operatorname{cr}({\mathcal D}_n)}{n^4}.
\]
It remains to control independent pairs of edges that do not cross in the original drawing but may cross after the endpoint perturbations.

Call an unordered independent non-crossing pair of edges \(\{x_1x_2,\,x_3x_4\}\) dangerous if there exist points $y_i \in {\mathbb S}^2$ with $d(x_i, y_i) \leq \delta$ such that  \([y_1y_2]\) crosses \([y_3y_4]\).  It suffices to show that the number of dangerous pairs of edges  is at most $[C\sqrt{\delta} +o_\delta(1)]n^4$. 

Let \(c_0,C_0\) be the constants from \Cref{lem:reduction-new-crossing-geometric}.  Choose a large constant \(A\ge 1/\sqrt{c_0}\), and set $\alpha=A\sqrt{\delta}.$ If \(c\) is small enough, we may assume \(0<\alpha<1/10\).  Then \(\delta\le c_0\alpha^2\), so \Cref{lem:reduction-new-crossing-geometric} applies to every pair of edges whose lengths both lie in \([\alpha,\pi-\alpha]\).

First discard the unstable edges, namely those with length outside \([\alpha,\pi-\alpha]\).  For each endpoint $p$, the other endpoint of such an edge must lie in the \(\alpha\)-cap around \(p\) or in the \(\alpha\)-cap around \(-p\).  By \Cref{lem:reduction-uniform-counts} \eqref{eq:reduction-uniform-cap-count}, the number of unstable edges is at most $C[\alpha^2+o_\alpha(1)]n^2$. Hence, the number of edge pairs containing at least one unstable edge is at most $C[\alpha^2+o_\alpha(1)]n^4.$

Consider now a dangerous pair \(\{x_1x_2, \, x_3x_4\}\) in which both edges are stable.  By \Cref{lem:reduction-new-crossing-geometric}, one endpoint of one edge is within $R = C_0 \delta / \alpha$ of the other edge segment.  By decreasing \(c\) again, assume \(R<\pi/2\).  Fix a stable edge \(x_1x_2\).  If \(\Gamma({x_1x_2})\) is its supporting great circle, then
\[
        N_R([x_1x_2])\subseteq N_R(\Gamma({x_1x_2}))\,.
\]
Thus \Cref{lem:reduction-uniform-counts} gives, uniformly in \(x_1x_2\),
\[
        \#\{v\in V(G_n):v\in N_R([x_1x_2])\}
        \le
        \bigl(\sin R+o_R(1)\bigr)n
        \le
        \bigl(CR+o_R(1)\bigr)n\,.
\]
After choosing this near endpoint of the second edge, there are at most \(n\) choices for its other endpoint.  Summing over the at most \(n^2\) choices of the base edge gives at most $[CR + o_R(1)]n^4$ stable dangerous pairs. Combining the bounds above and using \(\alpha=O(\sqrt{\delta})\) and \(R=O(\sqrt{\delta})\), we get that the number of dangerous pairs is at most
\[        \bigl(C\alpha^2+CR+o_\delta(1)\bigr)n^4
        \le
        \bigl(C\sqrt{\delta}+o_\delta(1)\bigr)n^4\,.
\]
This concludes the proof.
\end{proof}

We also mention an easy corollary of \Cref{thm:continuous-spherical-crossing-main}.

\begin{corollary}\label{cor:near-uniform}
Let \(\rho\) be a probability density on \({\mathbb S}^2\) with $\norm{\rho-1}_{L^\infty(\mu)}\le\eta$. Let \(w:{\mathbb S}^2\times{\mathbb S}^2\to[0,\infty)\) be a symmetric function such that $w(x,y)\leq \rho(x)\rho(y)$. If $t \in [0,\pi]$ satisfies $\cos t = 1-2e(w) / (1+\eta)^2$, then 
\begin{equation}\label{eq:near-uniform}
    \operatorname{Cr}( w) \geq (1+\eta)^4\frac{(\sin t- t\cos t)^2}{8\pi^2}\,.
\end{equation}
\end{corollary}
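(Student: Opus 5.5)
Since $w(x,y)\le\rho(x)\rho(y)\le(1+\eta)^2$ pointwise, the natural move is to rescale $w$ into the range $[0,1]$ and apply \Cref{thm:continuous-spherical-crossing-main} directly. Set
\[
\tilde w(x,y):=\frac{w(x,y)}{(1+\eta)^2}\,.
\]
This function is measurable, symmetric and takes values in $[0,1]$. Its density is $e(\tilde w)=e(w)/(1+\eta)^2$, so the parameter $t$ in the statement is exactly the one attached to $\tilde w$ by $\cos t=1-2e(\tilde w)$. In particular $e(\tilde w)\le1$, so $t\in[0,\pi]$ is well defined.

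The crossing functional \eqref{eq:continuous-crossing-functional} is quadratic in $w$. Hence $\operatorname{Cr}(w)=(1+\eta)^4\operatorname{Cr}(\tilde w)$. Applying \Cref{thm:continuous-spherical-crossing-main} to $\tilde w$ gives
\[
\operatorname{Cr}(\tilde w)\ge\frac{(\sin t-t\cos t)^2}{8\pi^2}\,,
\]
and multiplying by $(1+\eta)^4$ yields \eqref{eq:near-uniform}. The density $\rho$ is used only to produce the pointwise bound on $w$; the measure throughout remains $\mu$, as in the theorem, so nothing further needs checking.

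The argument has no real obstacle; the only point needing care is the range condition $\tilde w\le1$. The statement's normalization $\cos t=1-2e(w)/(1+\eta)^2$ is chosen precisely to match this rescaling, so the derivation goes through as written.

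In the downstream application, $\eta\to0$ and $t$ depends continuously on the density. This is presumably how one combines \Cref{prop:discrete-continuous-reduction} with this corollary: first let $n\to\infty$, then $\delta\to0$, to recover \Cref{thm:main-spherical}.
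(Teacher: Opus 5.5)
Your proposal is correct and is essentially the paper's proof: rescale to $\tilde w = w/(1+\eta)^2$, observe $e(\tilde w)=e(w)/(1+\eta)^2$ and $\operatorname{Cr}(w)=(1+\eta)^4\operatorname{Cr}(\tilde w)$ by quadratic homogeneity, and apply \Cref{thm:continuous-spherical-crossing-main}. The only refinement is that $\rho\le 1+\eta$ holds only $\mu$-a.e., so $\tilde w\le 1$ holds off a null set; replacing $\tilde w$ by $\min(\tilde w,1)$ there leaves $e(\tilde w)$ and $\operatorname{Cr}(\tilde w)$ unchanged.
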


\begin{proof}
Since \(w(x,y)\le \rho(x)\rho(y)\) and \(\rho\le1+\eta\), the function $\tilde w = w / (1+\eta)^2$ takes values in \([0,1]\).  Hence \(\tilde w\) is an admissible continuous edge density for \Cref{thm:continuous-spherical-crossing-main}.  Its density is $\tilde e = e(w)/(1+\eta)^2$, so its crossing functional is $\operatorname{Cr}(\tilde w) = \operatorname{Cr}(w) / (1+\eta)^4$.  Applying \Cref{thm:continuous-spherical-crossing-main} to \(\tilde w\) gives \eqref{eq:near-uniform}.  
\end{proof}

We are now ready to combine \Cref{cor:near-uniform} and \Cref{prop:discrete-continuous-reduction} to prove \Cref{thm:main-spherical}.

\begin{proof}[Proof of \Cref{thm:main-spherical}]
Denote the ordered density of $G_n$ by $e_n$. Fix $\delta \in (0,c)$ with $c$ as in \Cref{prop:discrete-continuous-reduction}, and apply the proposition to ${\mathcal D}_n$.  Put $\eta_{n,\delta}:= \norm{\rho_{n,\delta}-1}_{L^\infty(\mu)}$ and  $t_{n,\delta}\in[0,\pi]$ with
\begin{equation*}
        \cos t_{n,\delta}=1-\frac{2e_n}{(1+\eta_{n,\delta})^2}\,.
\end{equation*}
By \Cref{cor:near-uniform} applied with $\rho=\rho_{n,\delta}$ and $w=w_{n,\delta}$ and by \Cref{prop:discrete-continuous-reduction}, as $n \to \infty$
\begin{equation}\label{eq:main-spherical-proof-lower-continuous}
        8\frac{\operatorname{cr}({\mathcal D}_n)}{n^4}
        +C\sqrt{\delta}+o_\delta(1) \geq \operatorname{Cr}(w_{n,\delta})\ge
        (1+\eta_{n,\delta})^4
        \frac{(\sin t_{n,\delta}-t_{n,\delta}\cos t_{n,\delta})^2}{8\pi^2}\,.
\end{equation}
Since $e_n\to e$ and $\eta_{n,\delta}\to0$, we have $t_{n,\delta}\to t$, where $\cos t=1-2e$.  Therefore, for all $\delta\in(0,c)$,
\begin{equation*}
        \liminf_{n\to\infty}
        8\frac{\operatorname{cr}({\mathcal D}_n)}{n^4}
        \ge
        \frac{(\sin t-t\cos t)^2}{8\pi^2}
        -C\sqrt{\delta}\,.
\end{equation*}
Letting $\delta\downarrow0$, we conclude \eqref{eq:main-spherical-sharp-density}.
\end{proof}

\bibliographystyle{plain}
\bibliography{references}

\end{document}